
\documentclass[fleqn]{article}
\usepackage{latexsym}
\usepackage{amsmath}
\usepackage{amssymb}
\usepackage{epsfig}
\usepackage{amsfonts}

\begin{document}
\newtheorem{definition}{Definition}[section]
\newtheorem{theorem}[definition]{Theorem}
\newtheorem{lemma}[definition]{Lemma}
\newtheorem{proposition}[definition]{Proposition}
\newtheorem{examples}[definition]{Examples}
\newtheorem{corollary}[definition]{Corollary}
\def\square{\Box}
\newtheorem{remark}[definition]{Remark}
\newtheorem{remarks}[definition]{Remarks}
\newtheorem{exercise}[definition]{Exercise}
\newtheorem{example}[definition]{Example}
\newtheorem{observation}[definition]{Observation}
\newtheorem{observations}[definition]{Observations}
\newtheorem{comments}[definition]{Comments}
\newtheorem{algorithm}[definition]{Algorithm}
\newtheorem{criterion}[definition]{Criterion}
\newtheorem{algcrit}[definition]{Algorithm and criterion}

\newenvironment{prf}[1]{\trivlist
\item[\hskip \labelsep{\it
#1.\hspace*{.3em}}]}{~\hspace{\fill}~$\square$\endtrivlist}
\newenvironment{proof}{\begin{prf}{Proof}}{\end{prf}}

\title{Solving linear differential equations}
\author{K.A.~Nguyen and M. van der Put \\
\footnotesize Department of Mathematics, University of Groningen,\\
\footnotesize P.O.Box 800, 9700 AV, Groningen, The Netherlands,\\
\footnotesize email  khuong@math.rug.nl and mvdput@math.rug.nl  }
\date{09-11-2007}

\maketitle\textrm{}

\begin{abstract}{The theme of this paper is to `solve' an absolutely irreducible differential module explicitly in terms of modules of lower dimension and finite extensions of the differential field  $K$. Representations of semi-simple Lie algebras and differential Galois theory are the main tools. The results  extend the classical work of G.~Fano. } \end{abstract}

\section*{Introduction}
L.~Fuchs posed the problem whether the $n$ independent solutions of a scalar linear
differential equation of order $n$ over $K=\mathbb{C}(z)$, under the assumption that these solutions
satisfy a non trivial homogeneous equation over $\mathbb{C}$, could be expressed in terms of solutions of scalar linear differential equations of lower order. G.~Fano wrote an extensive paper on this theme. His tools were an early form of the differential Galois group and an extensive knowledge of low dimensional projective varieties.

In the work of M.F.~Singer and in a recent paper of K.A.~Nguyen, a powerful combination of Tannakian methods and representations of semi-simple Lie algebras yields a complete solution to Fuchs' problem. We note that a  {\it scalar}
 differential equation is essential for Fuchs' problem. It is not clear whether this problem makes sense for a linear differential equation in the standard matrix form $Y'=AY$ (or in module form).

The theme of this paper is to explicitly  `solve' a differential equation of order $n$ in terms of  differential equations of lower order, whenever possible. This makes sense in terms of differential modules over $K$. It means that one tries to obtain a given differential module $M$ of dimension $n$, by constructions of linear algebra and, possibly, algebraic extensions of $K$, from differential modules of smaller dimension. In the sequel, $K$ will be a finite extension of $\mathbb{C}(z)$ (unless otherwise stated). For actual computer computations, one has to replace $\mathbb{C}$ be a `computable field', like the algebraic closure of $\mathbb{Q}$. We write $V=V(M)$ for the solution space of $M$ and $Gal(M)\subset {\rm GL}(V)$ for the differential Galois group of $M$. Further $(\frak{gal}(M),V)$ will denote the Lie algebra of
$Gal(M)$ acting on the solution space $V$.

If $M$ admits, for instance,  a non trivial submodule $N$, then $M$ is `solved' by $N,\ M/N$ and an element in $Ext^1(M/N,N)$ (corresponding to some
inhomogeneous equations $y'=f$ over the Picard-Vessiot field of $N\oplus M/N$). This is the reason that we will only consider irreducible modules $M$.

Fix an algebraic closure $\overline{K}$ of $K$. A module $M$
 over $K$ is called {\it absolutely irreducible} if
 $\overline{M}:=\overline{K}\otimes _KM$ is irreducible. Since
 $Gal(\overline{M})=Gal(M)^o$, this condition is equivalent to
 the statement that $V$ is an irreducible $Gal(M)^o$-module.

 If an irreducible module $M$ over $K$ becomes reducible after tensorization with $\overline{K}$, then this is a case where $M$ can be expressed, after a finite extension of $K$, into modules of
smaller dimension (see [C-W]). We will investigate this phenomenon in a future paper
and concentrate here (but not exclusively) on absolutely irreducible modules.

We will use the {\it notation}:
$\mu _n$ is the subgroup of order $n$ of $\mathbb{C}^*$, sometimes identified with scalar multiples
of the identity matrix or map; ${\bf 1}$ is the trivial module of dimension one and for a module $M$ of dimension $m$, we write $\det M =\Lambda ^mM$; further $M^*$ or $M^{-1}$ denotes the dual of $M$. The condition $\det M ={\bf 1}$ is equivalent to:
the matrix of $\partial$ with respect to a suitable basis of $M$ has trace 0.
For an absolutely irreducible module $M$ with $\det M={\bf 1}$ the group $Gal(M)^o$ is semi-simple and so is $\frak{gal}(M)$. Moreover $V(M)$ is an irreducible representation of $\frak{gal}(M)$. According to [S] and [N], $M$ {\it cannot} be solved in terms
of modules of lower dimensions and finite field extensions of $K$ if and only if $\frak{gal}(M)$ is simple and its representation $V(M)$ has smallest dimension among its non trivial representations. Let a scalar equation $L$ of order $n$ induce a representation of a simple Lie algebra of smallest dimension, then it is still possible that the $n$ independent solutions of
$L$ satisfy a non trivial homogeneous relation $F$ over $\mathbb{C}$ (in contrast to L.~Fuchs' opinion). The following list gives a complete answer. \\

\begin{small} {\it Simple Lie algebras, smallest dimension, degree of  $F$}\end{small}

\begin{tabular}{||c|c|c|c|c||}
\hline
symbol & & Lie algebra & smallest &$\deg F$\\
\hline
$A_n$ &  $n\geq 1$ & $\frak{sl}_{n+1}$ &$n+1$& NO\\

$B_n$ & $n\geq 3$ & $\frak{so}_{2n+1}$& $2n+1$& 2\\
$C_n$ & $n\geq 2$ & $\frak{sp}_{2n}$ &   $2n$& NO\\
$D_n$ & $n\geq 4$ & $\frak{so}_{2n}$ & $2n$& 2\\
$E_6$& &$\frak{e}_6$ & 27&3\\
$E_7$& &$\frak{e}_7$ & 56&4\\
$E_8$ & &$\frak{e}_8$ & 248&2 \\
$ F_4$& &$\frak{f}_4$&  26&2\\
$G_2$ & &$\frak{g}_2$ & 7&2\\
\hline
\end{tabular}
$\ $\\

We note that : $\frak{so}_3\cong \frak{sl}_2$, $\frak{so}_4\cong \frak{sl}_2\times \frak{sl}_2$,
$\frak{so}_5\cong \frak{sp}_4$ and
$\frak{so}_6\cong \frak{sl}_4$. \\

\noindent
The two cases where $N$ can be solved by modules of smaller dimension and finite field
extensions of $K$ are:
\noindent (a) $\frak{gal}(N)=\frak{g}_1\times \frak{g}_2$ and $V=V_1\otimes V_2$, where $V_i$ is an irreducible
 representation of $\frak{g}_i$ for $i=1,2$.
(b)  $\frak{g}:=\frak{gal}(N)$ is simple and the representation $V(N)$ does not have smallest dimension.\\
In  case (a) we produce an algorithm that expresses $N$ (after possibly a finite extension of $K$)
as a tensor product $N_1\otimes N_2$ with  $\dim N_i > 1$.\\
In  case (b) we produce a differential module $M$ corresponding to a representation of $\frak{g}$ of smallest
dimension and a construction of linear algebra by which $N$ is obtained from $M$. In the case that $Gal(N)$ is not
connected a finite (computable) extension of the field $K$ might be needed.

 Besides using the well known Tannakian methods (often called constructions of linear algebra)  a new construction (Theorem 1.3) is introduced which can be explained
as follows. An irreducible differential module $N$  is called {\it standard} if $Gal(N)$
is connected and the $\frak{gal}(N)$-module $V(N)$ is faithful of minimal dimension. Further an irreducible
differential module $M$ is called {\it adjoint} if $Gal(M)$ is connected and the $Gal(M)$-module $V(M)$ is
the adjoint representation. An adjoint differential module $M$ (for a given semi-simple Lie algebra) is obtained
as an irreducible submodule of ${\rm Hom}(N,N)$ where $N$ is a standard differential module (for the same semi-simple
Lie algebra). The new construction uses Lie algebra tools to go in the other direction, i.e., to obtain $N$ from $M$. Together with the Tannakian approach
the new construction provides a complete solution for both cases (a) and (b). For special
cases there are shortcuts not using adjoint differential modules.

For differential modules  of small dimension we rediscover and extend Fano's work. The finite group $Gal(M)/Gal(M)^o$ introduces a technical complication in the
method and is responsible for the finite extension of $K$  that are sometimes
needed. Our extensive use of  representations of semi-simple groups and semi-simple Lie algebras is a link between this paper and several chapters
of [K].

\section{Representations of semi-simple Lie algebras}

\subsection{General remarks on computations}

\noindent (1)
The fields $\mathbb{C}(z)$, $\overline{\mathbb{Q}}(z)$ and their finite extensions $K$ are $C_1$-fields. In particular a quadratic homogeneous form
over  $K$ in at least three variables has a non trivial zero and there are algorithms
(if $K$ is a `computable field')  producing such a zero, needed in some of the proposed computations.

For finite extensions of the second field there are efficient algorithms, due to M.~van Hoeij et al., implemented
in Maple, for finding submodules of a given `input module' $P$ (or, equivalently, for factoring differential operators over $K$). In particular, one can decide whether $P$ is irreducible. This algorithm is
less efficient for the verification that $P$ is absolutely irreducible. In the sequel we will suppose that the `input module' $P$ is absolutely irreducible. If $P$ happens to be irreducible but not absolutely irreducible, then the algorithms that we propose will either still work or demonstrate that $P$ is not
absolutely irreducible. \\

\noindent (2)
Suppose that the input module $P$ is (absolutely) irreducible. Then any
module $N$ obtained by a construction of linear algebra from $P$
is {\it semi-simple} (i.e., is a direct sum of irreducible submodules). A computation of $\ker (\partial ,{\rm End}(N))$ (i.e., the rational solutions of ${\rm End}(N)$) seems to be an efficient way to produce the direct summands of $N$. The characteristic polynomial of any $K$-linear $f:N\rightarrow N$ with $\partial f=0$ has coefficients in $\mathbb{C}$. Indeed, it coincides with the
characteristic polynomial of the induced map $V(f):V(N)\rightarrow V(N)$.
If $f$ is not a multiple of the identity on $N$, then any zero $\lambda \in \mathbb{C}$ of its characteristic polynomial yields a proper submodule
$\ker (f-\lambda ,N)$ of $N$.

If $N$ happens to be irreducible but is known to have direct
summands after a finite extension $K^+$ of the base field $K$, then one can explicitly compute $K^+$.

Consider, as an example (see also the example in Remarks 1.2 and the end of Section 3), the case where it is a priori known that $\overline{N}=A_1\oplus A_2$, where $A_1,A_2$ are non isomorphic irreducible differential modules over $\overline{K}$.
  Then
 $\ker (\partial ,{\rm End}(\overline{N}))=\mathbb{C}p_1+\mathbb{C}p_2$, where $p_1,p_2$ are the projections onto the two factors $A_1,A_2$ and
 $p_1+p_2=1$. The group $Gal(\overline{K}/K)$ acts as a finite group $H$
(faithfully) on this 2-dimensional vector space and the line $\mathbb{C}(p_1+p_2)$ is invariant. Therefore, there is another invariant line and $H$ is a cyclic group of order $d>1$. Thus ${\rm End}(N)$ contains precisely two 1-dimensional submodules, a trivial one and a non trivial one $L$ with
 $L^{\otimes d}={\bf 1}$. Then $L=Ke$ with $\partial e=\frac{g'}{dg}$ for some $g\in K^*$ and the field $K^+$ equals $K(\sqrt[d]{g})$.\\

\noindent (3)
(a) Suppose that two irreducible modules $M_1,M_2$ of the same dimension are given. An efficient way to investigate whether  $M_1$ and $M_2$ are isomorphic is to compute $\ker (\partial,
{\rm Hom}(M_1,M_2))$. This space is non zero if and only if  $M_1\cong M_2$.

\noindent
(b) Suppose that $M_2\cong M_1\otimes L$ holds for some unknown 1-dimensional module $L$. In order to find $L$ one considers the module $E:={\rm Hom}(M_1,M_2)={\rm Hom}(M_1,M_1)\otimes L$ and observes that
$L$ is a 1-dimensional direct summand of $E$. Using the method of (2) above
one can produce $L$.

\noindent
(c) Suppose that $M_1, M_2$ are absolutely irreducible and that
$\overline{M}_1\cong \overline{M}_2$. Then $\ker (\partial ,\overline{K}\otimes
{\rm Hom}(M_1,M_2))$ is a 1-dimensional vector space $\mathbb{C}\xi$.
Then $Gal(\overline{K}/K)$ acts as a cyclic group of order $d$ on
$\mathbb{C}\xi$. Thus  $\overline{K}\xi \subset  \overline{K}\otimes {\rm Hom}(M_1,M_2))$ is invariant under $Gal(\overline{K}/K)$ and induces a 1-dimensional submodule $L$ of ${\rm Hom}(M_1,M_2)$ with the properties
$L^{\otimes d}={\bf 1}$ and $M_2\cong M_1\otimes L$. Thus we can apply method (b) to find $L$. As in (2), $L$ defines a cyclic extension $K^+\supset K$ of degree $d$ and $K^+\otimes _KM_1\cong K^+\otimes _KM_2$.\\

 \noindent (4) Properties of a differential module $M$ over $K$ are often translated into
 properties of the (faithful) representation $(Gal(M),V(M))$ (and visa versa). By inverse Galois theory, any faithful representation of any linear algebraic group over
 $\mathbb{C}$ occurs for a differential field $K$ which is a finite extension of $\mathbb{C}(z)$.

\subsection{A table of irreducible representations}

We present here a list of irreducible representations $V,\ \dim V=d$,  of semi-simple Lie algebras, including the decomposition of $\Lambda ^2V$ and $sym^2V$.\\
{\it We adopt here and in the sequel of the paper
the efficient notation of the online program {\rm [LiE]} for irreducible representations}. \\
This is the following. After a choice of  simple roots $\alpha _1,\dots ,\alpha _d$, the Dynkin diagram
(with standard numbering of the vertices by the roots) and the fundamental weights $\omega _1,\dots ,\omega _d $ are well defined. The irreducible representation with
weight $n_1\omega _1+\cdots +n_d\omega _d$ is denoted by $[n_1,\dots ,n_d]$.
In particular, $[0,\dots ,0]$ is the trivial representation of dimension 1. \\

 \begin{center} {\it Table of the irreducible representations of dimension $d\leq 6$.}\end{center}

\begin{footnotesize}
\begin{tabular}{||l|l|l|l|l||}
\hline
d &Lie alg & repr & $\Lambda ^2$ & $sym ^2$  \\
\hline
$2$ &  $\frak{sl}_2$ & $[1]$ &$[0]$ &[2]\\
\hline
$3$ & $\frak{sl}_2$ & $[2]$& $[2]$ & $[4],[0]$ \\
\hline
$3$ & $\frak{sl}_3$ & $[1,0]$ &   $[0,1] $ & $[2,0]$\\
\hline
$4$&$\frak{sl}_2$ &$[3]$ &$[4],[0]$  & $[6],[2]$\\
\hline
$4$&$\frak{sl}_4$ &$[1,0,0]$ &$[0,1,0]$  &$[2,0,0]$\\
\hline
$ 4$&$\frak{sp}_4$ &$[1,0]$& $[0,1],[0,0]$ &$[2,0]$\\
\hline
$4$ &$\frak{sl}_2\times \frak{sl}_2$ &$[1]\otimes [1]$ &
$[0]\otimes [2] ,[2]\otimes [0]$  &$[0]\otimes [0],[2]\otimes [2]$ \\
\hline
$5$&$\frak{sl}_2$ & $[4]$ & $[6],[2]$ & $[8],[4],[0]$ \\
\hline
$5$ & $\frak{sp}_4$ & $[0,1]$ & $[2,0]$ & $[0,2],[0,0]$ \\
\hline
$5$ & $\frak{sl}_5$ & $[1,0,0,0]$ & $[0,1,0,0]$ & $[2,0,0,0]$ \\
\hline
$6$ & $\frak{sl}_2$ & $[5]$ & $[8],[4],[0]$ & $[10],[6],[2]$\\
\hline
$6$ & $\frak{sl}_3$ & $[2,0]$ & $[2,1]$ & $[4,0],[0,2]$\\
\hline
$6$&$\frak{sl}_4$&$[0,1,0]$&$[1,0,1]$&$[0,2,0],[0,0,0]$\\
\hline
$6$ & $\frak{sl}_6$ & $[1,0,0,0,0]$ & $[0,1,0,0,0]$ & $[2,0,0,0,0]$\\
\hline
$6$ &$\frak{sp}_6$&$[1,0,0]$&$[0,1,0],[0,0,0]$&$[2,0,0]$\\
\hline
$6$&$\frak{sl}_2\times \frak{sl}_2$&$[1]\otimes [2]$&
$[0]\otimes [0],[0]\otimes [4],[2]\otimes [2]$&
$[0]\otimes [2], [2]\otimes [0],[2]\otimes [4]$\\
\hline
$6$&$\frak{sl}_2\times \frak{sl}_3$&$[1]\otimes [1,0]$&
$[0]\otimes [2,0],[2]\otimes [0,1]$&$[0]\otimes [0,1],[2]\otimes [2,0]$ \\
\hline
\end{tabular}
\end{footnotesize}

$\ $\\

\noindent
For the $\frak{sl}_n$ with $n>2$ we have omitted duals of representations. Further we have left out symmetric cases.
 The decompositions of the second symmetric power
and the second exterior power are useful to distinguish the various
cases. We are here especially interested in those representations which can be expressed in terms of representations of lower dimension. In dimensions $7-11$, one finds for the new items of this sort
(here we omit the case $\frak{sl}_2$ which is fully treated in section 2
 and again we omit duals and symmetric situations) the list:\\
\noindent $\frak{sl}_3$ with $[1,1]$ (dim 8), $[3,0]$ (dim 10); $\ \ $ $\frak{sl}_4$ with $[2,0,0]$ (dim 10);\\
$\frak{sl}_5$ with $[0,1,0,0]$ (dim 10);$\ $ $\frak{so}_7$ with $[0,0,1]$ (dim 8); $\ $
$\frak{sp}_4$ with $[2,0]$ (dim 10);\\
\noindent $\frak{sl}_2\times \frak{sl}_2$ with $[1]\otimes [3]$ (dim 8), with $[2]\otimes [2]$ (dim 9), with $[1]\otimes [4]$ (dim 10);\\
$\frak{sl}_2\times \frak{sl}_3$ with $[2]\otimes [1,0]$ (dim 9); $\ \ $$\frak{sl}_2\times \frak{sl}_4$ with $[1]\otimes [1,0,0]$ (dim 8);\\
$\frak{sl}_2\times \frak{sp}_4$ with $[1]\otimes [1,0]$ (dim 8), with $[1]\otimes [0,1]$ (dim 10);\\
 $\frak{sl}_2\times \frak{sl}_5$ with $[1]\otimes [1,0,0,0]$ (dim 10); $\ \ $
$\frak{sl}_3\times \frak{sl}_3$ with $[1,0]\otimes [1,0]$ (dim 9);\\
$\frak{sl}_2\times \frak{sl}_2\times \frak{sl}_2$ with $[1]\otimes [1]\otimes [1]$ (dim 8).\\

\subsection{Comparison of the representations of $G$ and  $\frak{g}$}

For a {\it connected} semi-simple group $G$ with Lie algebra $\frak{g}$ one considers the categories $Repr_G$ of the representations of $G$ on finite dimensional vector spaces
over $\mathbb{C}$ and $Repr_{\frak{g}}$, the category of the representations of $\frak{g}$
on finite dimensional vector spaces over $\mathbb{C}$. Any representation of $G$ on a vector space induces a representation of $\frak{g}$ on the same vector space.  This defines a functor $T:\ Repr_G\rightarrow Repr_{\frak{g}}$, which is fully faithful,  i.e.,
 ${\rm Hom}_G(V_1,V_2)\rightarrow  {\rm Hom}_{\frak{g}}(V_1,V_2)$ is a bijection.  Further, $G$ is simply connected if and only if $T$ is an equivalence.

For a representation $W\in Repr_G$, we write $\{\{W\}\}$ for the Tannakian subcategory generated by $W$ (i.e., the objects of this subcategory are obtained from $W$ by constructions of linear algebra). The action of $G$ is faithful if and only if $\{\{W\}\}=Repr_G$. Similarly, for an object $W\in Repr_{\frak{g}}$ one writes $\{\{W\}\}$ for the smallest Tannakian subcategory generated by $W$.

 Suppose that $\frak{g}$ acts faithfully on $W$, then in general $\{\{W\}\}\neq Repr_{\frak{g}}$. Indeed, let $G^+$ be the simply connected group
with Lie algebra $\frak{g}$. Then $W$ has a unique structure as $G^+$-module compatible with its structure as $\frak{g}$-module. The kernel $Z'$ of the action of $G^+$
on $W$ is a finite group. Put $H:=G^+/Z'$. Then $W$ is a faithful $H$-module and generates $Repr_H$. Thus the subcategory $\{\{W\}\}$ of $Repr_{\frak{g}}$ is the image
under $T$ of $Repr_H$.\\

\noindent
{\it  Example}: $G={\rm SL}_3$ is simply connected and $\frak{g}=\frak{sl}_3$. There is only one other connected group with Lie algebra $\frak{sl}_3$, namely
${\rm PSL}_3={\rm SL}_3/\mu _3$. Let $V$ be the standard representation of
${\rm SL}_3$ with $T$-image $(\frak{sl}_3,[1,0])$. Then $sym^3V$ is a faithful representation for ${\rm PSL}_3$ and its image under $T$ is  $W:=(\frak{sl}_3,[3,0])$.
Then $\{\{V\}\}=Repr _{\frak{sl}_3}$ and $\{\{W\}\}$ is the full subcategory of $Repr_{\frak{sl}_3}$ for  which the irreducible objects are the $[a,b]$ with $a\equiv b\mod 3$.\\
\noindent  {\it Consequences for differential modules}. Let the input module $P$ be an absolutely irreducible differential  module with $\det P={\bf 1}$ which induces
$W:=(\frak{sl}_3,[a,b])$ with, say, $[a,b]\neq  [1,0],[0,1]$.  Now, we do not assume that $Gal(P)$ is connected. There exists, as we know, a differential module $M$ of dimension 3 inducing $(\frak{sl}_3,[1,0])$ such that $P$ is obtained from $M$ by constructions of linear algebra and possibly a finite field extension of $K$.

If $a\not \equiv b\mod 3$, then $[1,0]$ is obtained from $W$ by a construction $cst_1$ of linear algebra and $[a,b]$ is (of course) obtained by a construction $cst_2$ from
$[1,0]$. Let $M$ be obtained from $P$ by construction $cst_1$. Then $cst_2$ applied
to $M$ yields a module $\tilde{P}$ which is isomorphic to $P$ over the algebraic closure
of $K$. Case (3)(c) of  Subsection 1.1 solves this.

If $a\equiv b\mod 3$, then one obtains by a construction of linear algebra a module $Q$
which induces $(\frak{sl}_3,[1,1])$.  The step from $Q$ to a module which induces
$(\frak{sl}_3,[1,0])$ cannot be done by constructions of linear algebra. This  {\it problem}
is an example for the main theme of the remainder of this section.\\

In general, the problem has its origin in the possibilities for the connected groups with a given (semi-) simple Lie algebra $\frak{g}$. There is a simply connected group $G$ with Lie algebra $\frak{g}$. Its center $Z$ is a finite group. Any connected group with Lie algebra $\frak{g}$ has the form $G/Z'$ where $Z'$ is a subgroup of $Z$. The list of the groups $Z$ that occur  is, see [H], p. 231:
$\mathbb{Z}/(n+1)\mathbb{Z}$ for $A_n$; $\ \ \mathbb{Z}/2\mathbb{Z}$ for $B_\ell,C_\ell,E_7$;
$\ \ \mathbb{Z}/2\mathbb{Z}\times \mathbb{Z}/2\mathbb{Z}$ for $D_\ell$ with $\ell$ even;
$\mathbb{Z}/4\mathbb{Z}$ for $D_\ell$ with $\ell$ odd; $\mathbb{Z}/3\mathbb{Z}$ for $E_6$; $0$ for $E_8,F_4,G_2$.
The following proposition, closely related to Corollary 2.2.2.1 of [K], is a non constructive solution to our problem.

\begin{proposition} Let $G^+\rightarrow G$ be a surjective morphism of connected linear
algebraic groups over $\mathbb{C}$ having a finite kernel $Z$. Let $M$ be a differential module over $K$ with $Gal(M)=G$. Suppose that
$K$ is a $C_1$-field. Then there  exists a differential module $N$ over $K$ with $Gal(N)=G^+$, such that the faithful representation $(Gal(N),V(N))$ has minimal dimension and  such that  $M\in \{\{N\}\}$.
\end{proposition}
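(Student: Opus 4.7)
The plan is to exploit the $C_1$-hypothesis on $K$, via Steinberg's theorem, to trivialise the $G$-torsor underlying the Picard--Vessiot ring of $M$, and then to lift the resulting connection matrix along the Lie algebra isomorphism induced by the central isogeny $\pi:G^{+}\to G$. Applied in a faithful representation of $G^{+}$ of minimal dimension, the lifted matrix will produce the desired $N$.

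In detail, since $Z\subseteq G^{+}$ is a finite normal subgroup of a connected group in characteristic zero, $Z$ is central and $d\pi:\mathfrak{g}^{+}\stackrel{\sim}{\longrightarrow}\mathfrak{g}$ is an isomorphism. Steinberg's theorem applied to the connected group $G$ over the perfect $C_1$-field $K$ gives $H^{1}(K,G)=1$; thus the $G$-torsor ${\rm Spec}(R_M)$ is trivial, and one may choose a basis of $V(M)$ in which the connection matrix $A_M$ of $M$, viewed inside $\mathfrak{gl}(V(M))$ via the defining faithful representation $G\hookrightarrow {\rm GL}(V(M))$, lies in $\mathfrak{g}(K)$. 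Let $A^{+}\in\mathfrak{g}^{+}(K)$ be the preimage of $A_M$ under $d\pi$, fix a faithful representation $\tau:G^{+}\hookrightarrow {\rm GL}(V_0)$ of minimal dimension, and define
\[N:=\bigl(K\otimes_{\mathbb{C}}V_0,\ \partial_K+d\tau(A^{+})\bigr).\]

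Two things need verification. First, the connection matrix of $N$ lies in $d\tau(\mathfrak{g}^{+}(K))$, so $Gal(N)\subseteq \tau(G^{+})=G^{+}$. Second, applying to $N$ the linear-algebra construction associated with the representation $\pi^{*}V(M)\in Repr_{G^{+}}$ returns $(K\otimes V(M),\partial_K+d\tau_{V(M)}(A_M))=M$; hence $M\in\{\{N\}\}$, and by Tannakian duality the induced surjection $Gal(N)\twoheadrightarrow Gal(M)=G$ coincides with $\pi|_{Gal(N)}$, so $\pi(Gal(N))=G$ and $\dim Gal(N)\ge\dim G=\dim G^{+}$. Combined with $Gal(N)\subseteq G^{+}$ this forces $Gal(N)^{o}=G^{+}$, hence $Gal(N)=G^{+}$; and $\dim V(N)=\dim V_0$ is minimal by construction. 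The one essential obstacle is the torsor trivialisation, which is exactly where the $C_1$-hypothesis enters via Steinberg's theorem; without it, a finite algebraic extension of $K$ would in general be needed before this construction could be carried out.
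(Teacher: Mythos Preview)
Your argument is correct and follows the same strategy as the paper: both use the $C_1$-hypothesis to trivialise the $G$-torsor ${\rm Spec}(PVR(M))$, after which the construction of $N$ amounts to lifting along the isogeny $G^{+}\to G$. The paper phrases this lift by uniquely extending the derivation from $PVR(M)=K\otimes\mathbb{C}[G]$ to the finite \'etale overring $K\otimes\mathbb{C}[G^{+}]$ (which directly identifies the latter as $PVR(N)$, giving $Gal(N)=G^{+}$ immediately), whereas you lift the connection matrix through the Lie-algebra isomorphism $d\pi$ and recover $Gal(N)=G^{+}$ via your surjectivity-plus-dimension argument; these are equivalent formulations of the same construction.
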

\begin{proof} The Picard-Vessiot ring $PVR$ of $M$ is a $G$-torsor over $K$. Since $K$ is a $C_1$-field and $G$ is connected, this torsor is trivial and thus
$PVR=K\otimes _\mathbb{C}\mathbb{C}[G]$, where $\mathbb{C}[G]$ is the coordinate
ring of $G$. The $G$-action on $PVR$ is induced by the $G$-action on $\mathbb{C}[G]$.
The differentiation, denoted by $\partial$, commutes with the $G$-action, but is not
explicit. Now $\mathbb{C}[G]=\mathbb{C}[G^+]^Z$ and this yields an embedding
$PVR\subset R:=K\otimes _\mathbb{C}\mathbb{C}[G^+]$. The differentiation $\partial$ on $PVR$
extends in a unique way to $R$ since $PVR\subset R$ is a finite \'etale extension. The extended differentiation commutes with the action of $G^+$.
Further, $R$ has only trivial differential ideals since $R$ is finite over $PVR$ and $PVR$ has only trivial differential ideals. Let $W$ be a
faithful representation of $G^+$ of minimal dimension $d$. Then one writes
$\mathbb{C}[G^+]=\mathbb{C}[\{X_{i,j}\}_{i,j=1,\dots d},\frac{1}{D}]/J$, where
$D=\det (X_{i,j})$ and $J$ is the ideal defining $G^+$ as subgroup of ${\rm GL}(W)$.
Write $x_{i,j}$ for the image of $X_{i,j}$ in $\mathbb{C}[G^+]$. Define the matrix
$A$, with entries in $R$, by $(\partial x_{i,j})=A(x_{i,j})$. Then $A$ is invariant under the action of $G^+$
and therefore its entries are in $K$. It now follows that $R$ is the Picard-Vessiot ring
for the differential equation $Y'=AY$. This equation defines the required differential
module $N$ over $K$.\end{proof}

\begin{remarks} Non connected linear algebraic groups. {\rm \\
Let $G$ be a linear algebraic group such that $G^o$ is semi-simple
and $G\neq G^o$. Let $\frak{g}$ denote the Lie algebra of
$G^o$. As explained above, the functor
$T: Repr_{G^o}\rightarrow Repr_{\frak{g}}$ induces an equivalence of
the first category with a well described full subcategory of the second one. The forgetful functor $F: Repr_G\rightarrow Repr_{G^o}$ is not fully faithful. Indeed, one can easily construct an irreducible $G$-module  $W$ (i.e., a finite dimensional representation of $G$) which is, as $G^o$-module, the direct sum of several copies of an irreducible $G^o$-module.

 A more delicate situation occurs when $Out(G^o):=Aut(G^o)/Inner (G^o)$ is not trivial. The group $Out(G^o)$ permutes the irreducible
 representations of $G^o$. The action by conjugation of $G$ on
 $G^o$ induces a homomorphism $G/G^o\rightarrow Out(G^o)$.
 If this homomorphism is not trivial, then one can construct an    irreducible $G$-module $W$ such that $W$, seen as a $G^o$-module, is a direct sum of distinct irreducible $G^o$-modules forming
a single orbit under the action of $G/G^o$.

We recall that for a connected simple $H$ the group $Out$ is equal to
the automorphism group of the Dynkin diagram of its Lie algebra $\frak{h}$.
According to [J], Theorem 4, p. 281, one has:\\
 $Out=S_3$ for $\frak{so}_8$;
 $Out=\mathbb{Z}/2\mathbb{Z}$ for $\frak{sl}_n,\ n>2$, for  $\frak{so}_{2n},
\ n\geq 3, n\neq 4$ and for $\frak{e}_6$.
 For the other {\it simple} Lie algebras $Out$ is  trivial.
>From this list one deduces $Out$ for any semi-simple Lie algebra, e.g.,
$Out(\frak{sl}_2\times \frak{sl}_2\times \frak{sl}_2)=S_3$.\\

\noindent {\it Example}. $Out({\rm SL}_4)$ is generated by the element
$A\mapsto (A^t)^{-1}$. This non trivial element changes a representation $[a,b,c]$ of $\frak{sl}_4$
 (or of ${\rm SL}_4$) into its dual $[c,b,a]$.
Choose a group $G$ with $G^o={\rm SL}_4$,
 $[G: G^o]=2$ and $G/G^o\rightarrow Out(G^o)$ is bijective.
Then there is an irreducible $G$-module $W$ which induces
the $\frak{sl}_4$-module $[1,0,0]\oplus [0,0,1]$. Thus $W$ is reducible
as $G^o$-module.

 \noindent {\it Consequence for differential modules}.  Let $M$ be an absolutely irreducible differential module with
 $\det M={\bf 1}$ such that the
 $\frak{gal}(M)$-module $V(M)$ has a non trivial direct sum decomposition. Then, in general, a finite field
 extension of $K$  is needed to obtain a corresponding direct sum decomposition of $M$.  }\end{remarks}

\subsection{Standard and adjoint differential modules}

Let $G$ be simply connected semi-simple linear algebraic group over $\mathbb{C}$ with Lie algebra
$\frak{g}$. One writes $G=G_1\times \cdots \times G_s$ and $\frak{g}=\frak{g}_1\times \cdots \times
\frak{g}_s$ for the decompositions into simple objects. The {\it standard representation} of $G$
is the direct sum $V=\oplus _{i=1}^sV_i$, where each $V_i$ is the faithful representation of $G_i$ of
 smallest dimension. For the groups
${\rm SL}_n,\ n>2$, both the standard representation and its dual have
smallest dimension. This ambiguity in the above definition is of no importance for the sequel.
 The $G$-module ${\rm End}(V)=V^*\otimes V$ has $\frak{g}$ as irreducible submodule. The action
 of $G$ on $\frak{g}$ is the {\it adjoint
representation}. The kernel of this action is the finite center $Z$ of $G$ and
$G/Z$ is the adjoint group.

A differential module $M$ over $K$ is called {\it standard} ({\it adjoint} resp.) for $G$  if $Gal(M)$
 is connected, $\det M={\bf 1}$ and $(Gal(M),V(M))$ is isomorphic to the standard (adjoint resp.)
 representation of $G$. For any standard differential module $M$, the module ${\rm End}(M)$ contains
a unique direct summand which is an adjoint differential module.

\bigskip

Let $G$  be a simply connected semi-simple group with Lie algebra $\frak{g}$
and let $V$ be the standard representation of $G$. An {\it explicit standard module for $G$}
is the following. Write $M=K\otimes _{\mathbb{C}}V$.
Then $\frak{g}$ is identified, as before, with a subspace of
${\rm End}(V)$ and $\frak{g}(K)=K\otimes \frak{g}$ is identified with the $K$-vector space
 $K\otimes \frak{g}\subset {\rm End}_K(M)$.  Define the derivation
$\partial _0$ on $M$ by $\partial _0$ is zero on $V$. For any element  $S$ of $\frak{g}(K)$ one
 defines the derivation $\partial _S:=\partial _0+S$ on $M$. According to Proposition 1.31 of [vdP-S],
 the differential Galois group
of $(M,\partial _S)$ is contained in $G$. We call $M=(M,\partial_S)$ an explicit standard module for
 $G$ if the differential Galois group is equal to $G$. According to Section 1.7 of [vdP-S],
 the differential Galois group is equal to $G$ for sufficiently general $S$. On the other hand,
 under the assumption that
$K$ is a $C_1$-field, every standard differential module for $G$ has an explicit representation
 $(M,\partial _S)$ by Corollary 1.32 of [vdP-S].

 For any explicit standard module $(M,\partial _S)$, one considers the direct summand
 $N:=K\otimes _{\mathbb{C}}\frak{g}$ of ${\rm End}_K(M)$.  Define the derivation $\partial _0$ on
 $N$ by $\partial _0$ is zero on $\frak{g}$. One easily verifies that $(M,\partial _S)$ induces on $N$ the derivation
 $A\mapsto \partial _0(A)+[A,S]$. In this way $(M,\partial _S)$ induces an
 adjoint differential module.

 \begin{theorem} Let $N$ be an adjoint differential module for $G$. The
 $\mathbb{C}$-Lie algebra structure of $\frak{g}=V(N)$ induces a $K$-Lie algebra structure
 $[\ ,\ ]$ on $N$ satisfying $\partial [a,b] =[\partial a,b]+[a,\partial b]$ for all  $a,b\in N$.
 This structure is unique up to multiplication by an element in $\mathbb{C}^*$.

  The assumption that $K$ is a $C_1$-field implies that there exists an isomorphism of $K$-Lie
 algebras  $\phi :N\rightarrow K\otimes _{\mathbb{C}}\frak{g}$. After choosing $\phi$, there
 exists a unique $S\in \frak{g}(K)$ such that $N$ is isomorphic to the adjoint module induced by
 the explicit standard module $(M,\partial _S)$.
 \end{theorem}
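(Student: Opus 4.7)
The plan is to attack the three assertions separately, all via the Tannakian equivalence between the category $\{\{N\}\}$ generated by $N$ and $\text{Repr}_G$, where $G=Gal(N)$ acts on $V(N)=\frak{g}$ through the adjoint representation.

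For the existence of the bracket on $N$ satisfying the Leibniz rule, I would observe that the Lie bracket $\frak{g}\otimes_\mathbb{C}\frak{g}\to\frak{g}$ is a $G$-equivariant map; under the Tannakian equivalence it corresponds to a morphism of differential modules $N\otimes_K N\to N$, i.e., a $K$-bilinear map. The condition for a $K$-linear map to commute with $\partial$ is precisely the Leibniz identity $\partial[a,b]=[\partial a,b]+[a,\partial b]$, and antisymmetry and Jacobi transfer from $\frak{g}$ under the same equivalence applied to the corresponding identities. For uniqueness, any $\partial$-compatible $K$-bilinear bracket on $N$ corresponds to some $G$-equivariant map $\frak{g}\otimes\frak{g}\to\frak{g}$; by Schur and the fact that for each simple summand $\frak{g}_i$ the adjoint representation occurs in $\Lambda^2\frak{g}_i$ with multiplicity one, the space of antisymmetric such maps is one-dimensional per simple component, giving uniqueness up to $\mathbb{C}^*$.

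For the existence of $\phi$ I would use the $C_1$ hypothesis, in the spirit of the proof of Proposition 1.1. Produce an explicit standard module $(M_0,\partial_T)$ for $G$, available by Corollary 1.32 of [vdP-S], and form its associated adjoint module $N_0=K\otimes_\mathbb{C}\frak{g}$, which has the obvious canonical $K$-Lie algebra structure transported from $\frak{g}$. Then both $N$ and $N_0$ are adjoint modules for $G$ with the same solution space $\frak{g}$ as a $G$-module, so $\text{Hom}_K^\partial(N,N_0)=\text{End}_G(\frak{g})\neq 0$ and contains an isomorphism of differential modules. By the uniqueness of the bracket from the previous step (applied to the bracket on $N$ transported via this isomorphism), such a differential isomorphism is automatically a $K$-Lie algebra isomorphism, after rescaling by an element of $\mathbb{C}^*$ on each simple component if necessary, yielding $\phi$.

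For the last assertion, transport $\partial$ from $N$ to a $\mathbb{C}$-derivation $D$ of $K\otimes_\mathbb{C}\frak{g}$ via $\phi$, which then satisfies the Leibniz rule with respect both to multiplication by $K$ and to the Lie bracket. Write $D=\partial_0+\Delta$ with $\partial_0$ zero on $\frak{g}$; since both $D$ and $\partial_0$ are compatible with the bracket, $\Delta$ is $K$-linear and is a derivation of the Lie algebra $K\otimes_\mathbb{C}\frak{g}$. The classical theorem that every derivation of a semi-simple Lie algebra is inner gives $\Delta=\text{ad}(S)$ for a unique $S\in K\otimes_\mathbb{C}\text{Der}_\mathbb{C}(\frak{g})=K\otimes_\mathbb{C}\frak{g}=\frak{g}(K)$; uniqueness of $S$ follows from $Z(\frak{g})=0$. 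This exhibits $N$ as the adjoint module induced by the explicit standard module $(M,\partial_S)$. The main obstacle I anticipate is the middle step: one must combine the triviality of the Picard--Vessiot torsor (from the $C_1$ property) with the rigidity of adjoint modules (they are determined by their solution $G$-module), and then verify that the resulting differential-module isomorphism genuinely transports the intrinsic Lie bracket—essentially pulling the full algebraic structure through the Tannakian equivalence rather than merely the linear one.
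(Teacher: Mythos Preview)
Your treatments of the first and third assertions are essentially those of the paper: the bracket comes from the $G$-equivariant map $\Lambda^2\frak{g}\to\frak{g}$ via the Tannakian equivalence (so $\partial F=0$ is exactly the Leibniz rule), and once $\phi$ is in hand the difference $\partial-\partial_0$ is a $K$-linear Lie derivation of the semi-simple $K$-Lie algebra $K\otimes_{\mathbb{C}}\frak{g}$, hence inner, giving the unique $S$.

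The gap is in your construction of $\phi$. You produce an auxiliary explicit standard module $(M_0,\partial_T)$, take its adjoint $N_0$, and then assert $\mathrm{Hom}_K^\partial(N,N_0)=\mathrm{End}_G(\frak{g})$. This equality is not available: it would hold if $N$ and $N_0$ were known to lie in a common Tannakian subcategory $\{\{M_0\}\}$ with Galois group $G$, but nothing links your $M_0$ to the given $N$. In terms of the universal group $\mathcal{U}$, $N$ and $N_0$ correspond to two homomorphisms $\mathcal{U}\to G/Z$ with the same image but no reason to be conjugate, so $\mathrm{Hom}_{\mathcal{U}}(V(N),V(N_0))$ may well be zero. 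Concretely, already for $G=\mathrm{SL}_2$ there are many pairwise non-isomorphic adjoint modules over $\mathbb{C}(z)$; the ``rigidity'' you invoke (adjoint modules determined by their solution $G$-module) is false, and Corollary~1.32 of [vdP-S] does not manufacture a standard module related to $N$---it only rewrites a standard module you already have.

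The paper avoids this by working intrinsically with $N$: since $Gal(N)$ is connected and $K$ is $C_1$, the Picard--Vessiot torsor $\mathrm{Spec}(PVR(N))$ is trivial, so it admits a $K$-algebra homomorphism $e:PVR(N)\to K$. One then applies $e$ to the canonical isomorphism $PVR(N)\otimes_{\mathbb{C}}\frak{g}\to PVR(N)\otimes_K N$, which is compatible with the Lie brackets by construction, and reads off $\phi:K\otimes_{\mathbb{C}}\frak{g}\to N$ directly. No auxiliary module $N_0$ is needed; the $C_1$ hypothesis enters exactly once, to trivialize the torsor for $N$ itself.
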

 \begin{proof}  By definition, $(Gal(N),V(N))$ is the adjoint action of $G$ (or $G/Z$) on
 $\frak{g}$. The morphism of $G$-modules $\Lambda ^2\frak{g}\rightarrow \frak{g}$, given by
 $A\wedge B\mapsto [A,B]$, is $G$-equivariant and therefore is induced by a morphism of differential
 modules $F: \Lambda ^2N\rightarrow N$.  The map $F$ is a non zero element
 of $\ker (\partial ,{\rm Hom}(\Lambda ^2N,N))$, unique up to multiplication by an element in $\mathbb{C}^*$.
 Define for $a,b\in N$ the
 expression $[a,b]=F(a\wedge b)$. The statement $\partial F=0$
 is equivalent to $\partial [a,b]=[\partial a,b]+[a,\partial b]$ for all $a,b\in N$.

 \smallskip

 Let $PVR(N)$ denote the Picard-Vessiot ring for $N$. The canonical isomorphism
 $PVR(N)\otimes _{\mathbb{C}}V(N)\rightarrow PVR(N)\otimes _KN$ is, by construction, compatible
 with the Lie algebra structures on $N$ and
 $V(N)=\frak{g}$. Since $Gal(N)$ is connected and $K$ is a $C_1$-field, the $Gal(N)$-torsor $Spec(PVR(N))$ over $K$ is
 trivial (see [vdP-S]). This means that there is a
 $K$-algebra homomorphism $e: PVR(N)\rightarrow K$. One applies $e$
 to both sides of the above canonical isomorphism and finds an isomorphism of
 $K$-Lie algebras  $\phi : K\otimes _{\mathbb{C}}\frak{g}\rightarrow N$.

 \smallskip

  After choosing $\phi$ and identifying $N$ with
 $K\otimes _{\mathbb{C}}\frak{g}$,  one can define the derivation $\partial _0$
on $N$ by $\partial _0$ is zero on $\frak{g}$. Clearly, $\partial _0[a,b]=
[\partial _0 a,b]+[a,\partial _0 b]$ for all $a,b\in N$.
Therefore $\partial -\partial _0$
is a $K$-linear derivation of the semi-simple Lie algebra $N$ and there is a unique
 $S\in N=\frak{g}(K)$ such that $\partial -\partial _0=[\ \ ,S]$ (see [J], theorem 9, p.80).
 Thus we find that $\partial$ on $N$ is induced by $(M,\partial _S)$.  \end{proof}

\begin{comments}. {\rm  The computation of the Lie algebra structure $F$ on $N$
amounts to computing a rational solution (i.e., with coordinates in $K$) of the differential
 module ${\rm Hom}(\Lambda ^2N,N)$. The computation of $S\in \frak{g}(K)$ is an easy exercise on
 Lie algebras. The  computation of an isomorphism $\phi$ seems more complicated. It is essentially
 equivalent to the computation
of a Cartan subalgebra of $N$ which is `defined' over $\mathbb{C}$. As an example we  consider here
 the case $\frak{g}=\frak{sl}_2$.

\smallskip

Since $N\cong K\otimes _{\mathbb{C}}\frak{sl}_2$, we know that there exists
an element $h\in N$ such that the eigenvalues of $ad (h)$, acting upon $N$, are $0,\pm 2$.
 Such an element $h$ can be found by solving some quadratic equation over $K$. Any other
 candidate $h'$ is conjugated to $h$ by an automorphism of the $K$-Lie algebra $N$. We
 choose an element $e_1$ with
$[h,e_1]=2e_1$ and an element $e_2$ with $[h,e_2]=-2e_2$. The last element
is multiplied by an element in $K^*$ such that moreover $[e_1,e_2]=h$ holds.
The $\mathbb{C}$-subspace of $N$ generated by $h,e_1,e_2$ is isomorphic to
$\frak{sl}_2$ and we have found an isomorphism $\phi$.

The element  $h$, which generates a Cartan subalgebra for
$N$, defined over $\mathbb{C}$, is essentially unique. The vectors $e_1,e_2$
 can however be replaced by $fe_1,f^{-1}e_2$ for any $f\in K^*$. This reflects
  the observation that the differential module $M$, with $\det M={\bf 1}$ and differential
 Galois group ${\rm SL}_2$, that induces the adjoint module $N$ is not unique. In fact, one
 can replace $M$ by $Ke\otimes _KM$ where the 1-dimensional module $Ke$ is given  by
 $\partial e=\frac{f'}{2f}e$ with $f\in K^*$.  }\end{comments}

\subsection{A general solution to the problem}
We recall the following.
${\rm Diff}_K$ will denote the (neutral) Tannakian category of all differential modules over $K$. The Tannakian group of this category is
an affine group scheme $\mathcal{U}$ (this is the universal differential Galois group), i.e.,  we have an equivalence ${\rm Diff}_K\rightarrow Repr_{\mathcal{U}}$ of Tannakian categories. For any differential module $P$ over $K$, we denote by $\{\{P\}\}$ the
full Tannakian subcategory generated by $P$. The module $P$ corresponds a representation $\rho :\mathcal{U}\rightarrow {\rm GL}(V(P))$. Its image is $Gal(P)$.
By differential Galois theory, there is a natural equivalence of Tannakian categories $\{\{P\}\}\rightarrow Repr_{Gal(P)}$.

We note that the constructions of linear algebra in the category  $Repr_{\frak{g}}$ for a semi-simple $\frak{g}$ are known and can be found explicitly  by, for instance, the online program [LiE].\\

\noindent
{\bf  The problem}. $P$ is an input differential module. Find a differential module of smallest dimension $M$ such that $P\in \{\{M\}\}$ or, more precisely, some differential modules $M_1,\dots ,M_r$ with $\max \{ \dim M_i\}$ as small as possible such that $P$ lies in the Tannakian subcategory  $\{\{M_1,\dots ,M_r\}\}$  generated by  $\{M_1,\dots ,M_r\}$.

\smallskip
\noindent
{\bf A solution to the problem}. Suppose that the input module $P$ has the properties
{\it absolutely irreducible, $\det P={\bf 1}$ and $Gal(P)$ is connected}. Then $\frak{g}:=\frak{gal}(P)$ is semi-simple. Let $G$ be, as before, the simply connected group with Lie algebra $\frak{g}$.  Then  $Gal(P)=G/Z'$ for some subgroup $Z'$ of the center $Z$ of $G$.
The adjoint representation $(G/Z,\frak{g})$ lies in $Repr_{G/Z'}$. The construction of linear algebra $csrt(1)$ from $(G/Z',V(P))$ to $(G/Z,\frak{g})$ can be read off in the equivalent subcategory $\{\{(\frak{g},V(P) )\}\}$ of $Repr_{\frak{g}}$.
 Using the equivalence of
$\{\{P\}\}$ and $Repr_{G/Z'}$ one can apply  $csrt(1)$ to $P$. This yields $N\in \{\{P\}\}$ which maps to the adjoint representation $(G/Z,\frak{g})$. Thus $N$ is an adjoint differential module for $G$. Theorem 1.3 provides a standard module $M$ for $G$ which induces $N$.

 Using  the equivalences
 $\{\{M\}\}\rightarrow Repr_G\rightarrow Repr_{\frak{g}}$  one finds an
explicit construction of linear algebra $csrt(2)$ from $M$ to a differential module $Q$ with $(Gal(Q),V(Q))=(Gal(P),V(P))$. Now $P$ and $Q$ are
almost isomorphic.

What we know is that $csrt(1)$ applied to $P$ and
$Q$ produce $N$. Let $\rho: \mathcal{U}\rightarrow {\rm SL}(V(P))$ and $\rho ':\mathcal{U}\rightarrow {\rm SL}(V(Q))$ denote the representation corresponding to $P$ and $Q$.
The fact that $\rho$ and $\rho '$ yield the same representation $\rho '':\mathcal{U}\rightarrow
{\rm SL}(\frak{g})$, corresponding to $N$, implies that $\rho$ and $\rho '$ are projectively equivalent, i.e., $\rho (u)=c(u)\rho '(u)$ for all $u\in \mathcal{U}$ and with $c(u)\in \mathbb{C}^*$ (in fact $c(u)^n=1$ where $n=\dim _KP$). Let the 1-dimensional differential module $L$ correspond to the representation $\mathcal{U}\rightarrow \mathbb{C}^*,\ u\mapsto c(u)$. Then  $P\cong L\otimes _K Q$.  Finally, $L$ can be made explicit by Subsection 1.1. part
(3). Thus we have explicitly found $P\in \{\{M,L\}\}$. In the case
 that $G$ is semi-simple but not simple we write, as before,
 $G=G_1\times \cdots \times G_s$. The standard module $M$ is a direct
 sum $M_1\oplus \cdots \oplus M_s$ where each $M_i$ is a standard
 module for the simple $G_i$. Thus we have
 $P\in \{\{M_1,\dots ,M_s,L\}\}$ and this is a solution to our problem.

\begin{comments}
{\rm
(1) {\it Variations}.  In many cases there are {\it shortcuts}. \\
(1.1) If $Z'=\{1\}$, then $\{\{P\}\}\cong Repr_G$ and
there is an explicit construction  $csrt(3)$ from
$(\frak{g},V(P))$ to the standard module  $(\frak{g},V)$. Then
$csrt(3)$ applied to $P$ yields the standard differential module
$M$ with $P\in \{\{M\}\}$.\\
(1.2) If $Z'=Z$, then there is a construction of linear algebra from the
adjoint module $N$ (obtained from $P$) back to $P$.\\
(1.3) If the adjoint $G$-module $\frak{g}$ is not the faithful
$G/Z$-module of smallest dimension, then we may use a faithful $G/Z$-module
of smallest dimension at the place of $\frak{g}$. Example:
For $G=Sp_4$ the group $Z=\{\pm 1\}$ and $Sp_4/Z=SO_5$. The natural representation of $SO_5$ has dimension 5 and $\frak{sp}_4$ has dimension 10.\\

\noindent
(2) {\it Non connected groups}. More generally, we may consider absolutely irreducible differential modules $P$ with $\det P={\bf 1}$. This assumption is equivalent to the statement that  $V(P)$ is an irreducible $Gal(P)^o$-module. The finite group
$Gal(P)/Gal(P)^o$ introduces (in general) two kinds of obstructions  to the above method. Consider namely a $Gal(P)$-module $W$, obtained by some construction of linear algebra from $P$ and $V(P)$. The irreducible summands $\{W_i\}$ of $(Gal(P)^o,W)$ can be permuted by $Gal(P)$ because the image of $Gal(P)/Gal(P)^o$ in $Out(\frak{gal}(P))$  is not trivial. The other possible obstruction can occur when  some $W_i$ has multiplicity greater than one. A computable finite extension $\tilde{K}$ of the base field $K$  is needed to make this subspace invariant under the new (smaller) differential Galois group of $\tilde{K}\otimes _KP$ over $\tilde{K}$.

We study this in more detail for the case that $(Gal(P)^o,V(P))$ {\it is the adjoint representation}.
 After identifying $V(P)$ with $\frak{g}$, the group $Gal(P)$ is contained in the group $G^+=G^{++}\cap {\rm SL}(\frak{g})$, where $G^{++}$ is the normalizer of $G^o:=Gal(P)^o$ in ${\rm GL}(\frak{g})$.  An element $T\in {\rm GL}(\frak{g})$
 belongs to $G^{++}$ if and only if there exists a constant $c\in \mathbb{C}^*$ such that $[TA,TB]=c[A,B]$ for all $A,B\in \frak{g}$.  One obtains an exact sequence
$1\rightarrow (\mu _n\cdot G^o)/G^o \rightarrow  G^+/G^o\rightarrow Out(G^o)\rightarrow 1$, where $n$ is the dimension of $P$.

 If the image of $Gal(P)$ in $Out(G^o)$ is not trivial, then one has to compute a finite field extension of $K$ which kills this part of $Gal(P)$.  If the image is trivial, then one can replace $P$ by the direct summand $\tilde{P}$ of $P\otimes P^*$ which is an adjoint representation and one obtains a standard module $\tilde{M}$ with
 $\tilde{P}\in \{\{\tilde{M}\}\}$.  Further $P\cong \tilde{P}\otimes L$, where $L$ is a 1-dimensional differential module such that $L^{\otimes n}={\bf 1}$. As mentioned in Subsection 1.1, there is an easy algorithm for the computation of $L$.     \\

 \noindent (3) In the Sections 3--6 we investigate special cases of shortcuts and non connected groups.  This includes all cases, listed in Subsection 1.2,  where a differential module can be `solved' in terms of modules of lower dimension and field extensions.   }\end{comments}

\section{Symmetric powers of modules of dimension 2}

In this section we make the method of Section 1 explicit for $\frak{sl}_2$. There are two connected algebraic groups with Lie algebra $\frak{sl}_2$, namely ${\rm SL}_2$ and ${\rm PSL}_2$. The first group corresponds to $Repr_{\frak{sl}_2}$ and the second group to the full subcategory of
$Repr_{\frak{sl}_2}$  for which the irreducible objects are $\{[2n]\ |\ n\geq 0\}$.

By operations of linear algebra one can obtain from
the object $[2n]$ with $n>1$, the object $[2]$. Indeed, one has
$\Lambda ^2[2n]=\oplus _{k=1}^{n}[4k-2]$.

Similarly, consider the object $[2n+1]$ with $n>0$. Then
$sym^2[2n+1]=\oplus _{k=1}^{n+1}[4k-2]$ and this yields $[2]$. Further $[2]\otimes [2k+1]=[2k-1]\oplus [2k+1]\oplus [2k+3]$ and this yields $[2k-1]$. In this way $[1]$ is obtained by operations of linear algebra from $[2n+1]$.  The formulas, used here, follow easily from the characters formulas for tensor products. \\

Let $M$ be an absolutely irreducible differential module with $\det M={\bf 1}$ and $(\frak{gal}(M),V(M))=(\frak{sl}_2,[n])$. The above construction from $[n]$ to $[1]$ or $[2]$ can be copied for the differential module $M$, since the dimensions of the direct summands are distinct and thus these direct summands
are not only invariant under $Gal(M)^o$ but also under $Gal(M)$. Moreover, one can verify that
$Gal(M)$ is contained in $\mu _{n+1}\cdot sym^n({\rm SL}_2)$ and this has the same consequence.\\
The essential problem to solve is:
{\it  Let $M$ be an absolutely irreducible module with $\det M={\bf 1}$ and $(\frak{gal}(M),V(M))=(\frak{sl}_2,[2])$. Produce a module $N$ of dimension 2 with $\det N={\bf 1}$ and  $sym^2N=M$}.\\

We start by a result due,  in various forms, to G.~Fano, M.F.~Singer and  M.~van Hoeij -  M.~van~der ~Put. The result is based on the following {\it observation}. Let $N$ be a differential module
with basis $n_1,n_2$ and $\det N={\bf 1}$.  Then $M:=sym ^2N$ has basis $m_{11}=n_1\otimes n_1,\ m_{22}=n_2\otimes n_2,\ m_{12}=n_1\otimes n_2$. The element
$F=m_{12}\otimes m_{12}-m_{11}\otimes m_{22}\in sym ^2M$ satisfies $\partial F=0$ and
$F$ is a non degenerate and has a non trivial isotropic vector.

\begin{theorem} Let $M$ be a differential module of dimension 3 with $\det M={\bf 1}$.  Suppose that there exists $F\in sym^2M$ such that $F$ is non degenerate and has a non trivial isotropic vector (no further conditions on $M$ or $K$). Then there exists a differential module $N$ of dimension 2 with $\det N={\bf 1}$ and $sym ^2N\cong M$.
\end{theorem}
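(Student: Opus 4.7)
The plan is to put $F$ into a concrete normal form using the isotropic vector, and then to read off $N$ directly from the matrix of $\partial$ in that basis. The essential preliminary step is a discriminant computation. Fix any basis $m_1,m_2,m_3$ of $M$, let $A$ be the matrix of $\partial$ and $G$ the Gram matrix of $F$. The hypothesis $\det M = {\bf 1}$ provides a horizontal volume element $\omega = f\, m_1\wedge m_2 \wedge m_3$ satisfying $\partial f/f = -\text{tr}(A)$, while horizontality of $F$ reads $\partial G + AG + GA^T = 0$. Jacobi's formula then gives
\[
\frac{\partial\det G}{\det G} \;=\; \text{tr}(G^{-1}\partial G) \;=\; -2\,\text{tr}(A) \;=\; \frac{\partial(f^2)}{f^2},
\]
so $\det G / f^2$ lies in $\mathbb{C}^*$. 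Since $\mathbb{C}\subset K$ is algebraically closed, $\mathbb{C}^*\subset K^{*2}$, and hence $\det G$ is a square in $K^*$.

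With the discriminant trivial in $K^*/K^{*2}$, the non-degenerate isotropic form $F$ is $K$-equivalent (by the classification of ternary quadratic forms, using also that $-1$ is a square in $K$) to the standard form $m_2^2 - m_1 m_3$. I choose a basis $m_1,m_2,m_3$ of $M$ realising this, so the Gram matrix $G_0$ has constant entries. Horizontality of $F$ then becomes $AG_0 + G_0 A^T = 0$, i.e., $A$ lies in the Lie algebra $\frak{so}(G_0)\otimes K$. A direct computation identifies this three-dimensional Lie algebra as the matrices
\[
A \;=\; \begin{pmatrix} 2\alpha & \gamma & 0 \\ 2\beta & 0 & 2\gamma \\ 0 & \beta & -2\alpha \end{pmatrix}, \qquad \alpha,\beta,\gamma\in K.
\]
In particular $\text{tr}(A)=0$, consistent with $\det M={\bf 1}$.

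The required $N$ now reveals itself: let $N = Kn_1\oplus Kn_2$ with derivation $\partial n_1 = \alpha n_1+\beta n_2$ and $\partial n_2 = \gamma n_1-\alpha n_2$. The trace vanishes, so $\det N ={\bf 1}$. A short Leibniz calculation shows that the matrix of $\partial$ on $sym^2 N$ in the basis $(n_1^2,\,n_1n_2,\,n_2^2)$ is exactly the matrix $A$ above, so the assignment $n_1^2\mapsto m_1$, $n_1n_2\mapsto m_2$, $n_2^2\mapsto m_3$ is a differential module isomorphism $sym^2 N\cong M$.

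The main obstacle is the discriminant computation of the first step. Without the triviality of $\det G$ in $K^*/K^{*2}$, one cannot put $F$ in the form $m_2^2-m_1m_3$ over $K$, and at worst a quadratic extension of $K$ would be needed---reflecting the fact that ${\rm SL}_2\to{\rm SO}_3$ is a nontrivial two-to-one covering. The interplay of $\det M={\bf 1}$, horizontality of $F$, and the inclusion $\mathbb{C}\subset K$ precisely kills this obstruction and allows the construction to proceed over $K$ with no field extension or further hypothesis.
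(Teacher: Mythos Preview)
Your proof is correct and follows essentially the same strategy as the paper's: put $F$ into the normal form $m_2^2-m_1m_3$ (the paper writes it as $m_{12}^2-m_{11}m_{22}$), translate $\partial F=0$ into linear relations on the entries of the matrix of $\partial$, and then read off the $2\times 2$ matrix defining $N$. Your phrasing via $A\in\frak{so}(G_0)\otimes K$ is a tidy repackaging of exactly the same system of equations the paper writes out explicitly.

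Where you differ is that you supply an argument the paper silently skips. The paper simply asserts that ``$F$ has the form $m_{12}\otimes m_{12}-m_{11}\otimes m_{22}$ for a suitable basis'', but over a general $K$ a non-degenerate ternary form with an isotropic vector is only guaranteed to be equivalent to $xy+cz^2$, and reaching $c=1$ requires $c\in K^{*2}$. Your discriminant computation---using $\det M={\bf 1}$ and $\partial F=0$ to show $\det G/f^2$ is a constant, hence a square---is precisely the missing justification, and your closing paragraph correctly identifies this as the genuine obstruction (the ${\rm SL}_2\to{\rm SO}_3$ double cover). So your argument and the paper's are the same in outline, but yours is the more complete of the two.
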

\begin{proof}  $F$ has the form $m_{12}\otimes m_{12}-m_{11}\otimes
m_{22}$ for a suitable basis $\{m_{11},m_{22},m_{12}\}$ of $M$.
Consider a $K$-vector space $N$ with basis $n_1,n_2$ and define an isomorphism of
$K$-vector spaces $\phi :sym ^2N\rightarrow M$ by sending $n_1\otimes n_1,\ n_2\otimes n_2,\ n_1\otimes n_2$ to the elements $m_{11},m_{22},m_{12}$. We want to provide $N$
with a structure of differential module by putting $\partial n_i=\sum \alpha _{j,i}n_j$ for a
some matrix $(\alpha _{i,j})$ with trace 0, such that $\phi$ becomes an isomorphism
of differential modules.  Let the matrix of $\partial $ on $M$ be given by
$\partial m_{ij}=\sum \gamma _{kl,ij}m_{kl}$. The assumption $\partial F=0$ leads to the relations
\[ \gamma _{11,22}=\gamma _{22,11}=\gamma _{12,12}=\gamma _{11,11}+
\gamma _{22,22}=0, \ \gamma _ {11,12}=\gamma _{12,22}/2,\ \gamma _{22,12}=\gamma _{12,11}/2\ .\]
The condition that $\phi$ commutes with $\partial$ leads to the unique solution
\[\alpha _{1,2}= \gamma _{11,12},\ \alpha _{2,1}=\gamma _{22,12} ,\ \alpha _{1,1}=\gamma _{11,22}/2 ,\ \alpha _{2,2}= \gamma _{22,22}/2 \ . \] \end{proof}

\begin{corollary}[Test] (Let $K$ be a $C_1$-field). $M$ is an irreducible module, $\dim M=3,\ \det M={\bf 1}$. Then $M$ is isomorphic to a $sym^2N$ with $\dim N=2$,  $\det N={\bf 1}$
 if and only if $sym^2M$ contains a non trivial $F$ with $\partial F=0$.
\end{corollary}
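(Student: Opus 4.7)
The plan is to use Theorem 2.1 for the nontrivial direction, since it already converts the pair of geometric hypotheses on $F$ (non-degenerate, admitting a nontrivial isotropic vector) into the desired isomorphism $M\cong \mathrm{sym}^2 N$. So the work is to show that, under the hypotheses of the corollary, an arbitrary nonzero $\partial$-invariant $F\in \mathrm{sym}^2 M$ is automatically both non-degenerate and isotropic.

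For the ``only if'' direction there is nothing to prove beyond the observation preceding Theorem 2.1: if $M=\mathrm{sym}^2N$ with basis $n_1,n_2$ of $N$ and induced basis $m_{11},m_{22},m_{12}$ of $M$, then the element $F=m_{12}\otimes m_{12}-m_{11}\otimes m_{22}\in \mathrm{sym}^2M$ is $\partial$-invariant and nonzero.

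For the ``if'' direction, I would identify $F\in \mathrm{sym}^2M\subset M\otimes M$ with a symmetric $K$-linear map $\Phi_F:M^*\to M$. Because $\partial F=0$, the map $\Phi_F$ is a morphism of differential modules. Now $M$ irreducible implies $M^*$ irreducible, so $\ker\Phi_F$ is either $0$ or all of $M^*$; since $F\ne 0$ we must have $\ker\Phi_F=0$, i.e.\ $F$ is non-degenerate. Viewing $F$ as a non-degenerate symmetric bilinear form on the $3$-dimensional $K$-vector space $M^*$, the $C_1$ hypothesis on $K$ (Subsection 1.1, part (1)) guarantees a nontrivial isotropic vector. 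Theorem 2.1, applied to $M$ together with this $F$, now produces a differential module $N$ of dimension $2$ with $\det N=\mathbf{1}$ and $\mathrm{sym}^2N\cong M$.

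There is no real obstacle: the only point requiring any care is making the dictionary between $F\in \mathrm{sym}^2M$ and a quadratic form on $M^*$ explicit enough to (a) read off non-degeneracy from the irreducibility of $M$ via the map $\Phi_F$, and (b) match the hypothesis ``non-degenerate with a nontrivial isotropic vector'' of Theorem 2.1 exactly as stated, so that the conclusion applies verbatim.
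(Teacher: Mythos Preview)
Your proof is correct and follows essentially the same route as the paper: the paper phrases non-degeneracy via the radical $\{a\in M^*\mid \langle a,M^*\rangle=0\}$ being a $\partial$-invariant subspace, which is exactly your $\ker\Phi_F$, and then invokes the $C_1$ property for the isotropic vector before applying Theorem~2.1.
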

\begin{proof} The `only if' part follows from the above observation. Now suppose that
$F$ exists. Then $F$ determines the symmetric bilinear form $<a,b>=F(a\otimes b)\in K$ on $M^*$. The subspace $\{a\in M^*|\ <a,M^*>=0\}$ is invariant under $\partial$ because $\partial F=0$. Since $M$ is irreducible, this subspace is 0 and so $F$ is non degenerate. Further, $F$ has a non trivial isotropic vector since $K$ is a $C_1$-field. \end{proof}

\begin{remarks}{\rm (1)  An extension of the test 2.2 is the following ($K$ is again a $C_1$-field).
Suppose that $M$ satisfies $\det M={\bf 1},\
M=sym ^2N$ for some module $N$ of dimension 2 with $L=\det N$. Suppose
that $L\neq {\bf 1}$. Then $L^{\otimes 3}= \det M={\bf 1}$ and
$\det (L\otimes N)={\bf 1}$ and $M':=sym ^2(L\otimes N)$ is equal to
$L^{\otimes 2}\otimes M$. Thus $sym ^2(M')$ contains an element $F'$
with the properties $\partial F'=0$, $F'$ non degenerate and has a non trivial isotropic vector. This translates for $M$ into the existence of a 1-dimensional submodule $KF=L\otimes KF'$ of $sym ^2M$ with $(KF)^{\otimes 3}={\bf 1}$. This submodule can be found, for instance, by computing $\ker (\partial ,{\rm End}(sym ^2M))$.  \\
(2) For an absolutely irreducible $M$ with $\det M={\bf 1}$ and $\dim M=3$, the above produces
an algorithm for the required $N$ with $sym ^2N=M$ .\\
 If $\dim M=n+1>3$, then an obvious test
whether $M\cong sym^nN$ holds for some  $N$ with $\dim N=2,\ \det N={\bf 1}$ (or, more generally, $M\cong (sym ^nN)\otimes L$ with $\dim L=1$)
is to perform the algorithm (explained in the beginning of this section) for producing $N$. If this does not fail, then one finally verifies whether $M$ is isomorphic to $sym^nN$ (or, more generally, to $(sym ^nN)\otimes L$).\\
(3) Theorem 2.1 is an example of the basic construction in Theorem 1.3. Indeed, suppose that $\det M={\bf 1}$ and that $(\frak{gal}(M),V(M))=(\frak{sl}_2,[2])$. Then one easily sees that ${\rm PSL}_2=Gal(M)^o\subset Gal(M)\subset \mu _3\cdot Gal(M)^o$. If $Gal(M)=Gal(M)^o$, then $M$ is an adjoint module and the proof of Theorem 2.1 is a special case of Theorem 1.3. \\
If $Gal(M)\neq Gal(M)^o$, then
$sym ^2M$ contains a unique 1-dimensional submodule with differential Galois group
$\mu _3=Gal(M)/Gal(M)^o$, i.e., the module $L$ from the above (1). One can replace
$K$ by the Picard-Vessiot field $K^+$ of $L$, which is a 3-cyclic extension and observes
that $K^+\otimes M$ is an adjoint module. One may also replace $M$ by the adjoint module $M'=L^{\otimes 2}\otimes M$.\\
(4) Another test would be to find relations between the solutions $V(M)$ of $M$. Suppose for convenience that $z=0$ is a non singular point for $M$. Then one can (approximately) compute $V(M)$ as
$\ker (\partial , \mathbb{C}((z))\otimes M)$ and it might be possible to read off relations. This geometric approach is present in Fano's paper  and is worked out by M.F.~Singer.  We present here a more detailed
version of the Fano-Singer theorem. }\hfill $\square$ \end{remarks}

In order to apply the geometry of projective varieties of small dimension, Fano has introduced the notion: \\
\noindent
{\it `The solutions of  a scalar equation of order $n$ lie on a variety $S\subset \mathbb{P}^{n-1}$'.} \\
 Let  $L\in K[\partial ]$ be monic of degree $n$. Its Picard-Vessiot ring has the form
 \[ K[Y_1,\dots ,Y_n,Y_1^{(1)},\dots ,Y_n^{(n-1)},\frac{1}{W}]/I\ ,\]
 where $W$ denotes the Wronskian and where $I$ a maximal differential ideal. We write
 $y_i$ for the image of $Y_i$ in this ring. Consider the homogeneous ideal $H$ in $\mathbb{C}[Y_1,\dots ,Y_n]$ generated by the homogeneous elements  $h\in \mathbb{C}[Y_1,\dots ,Y_n]$ that belong to $I$. (We note that this does not depend on the choice of the basis $Y_1,\dots ,Y_n$). It is clear that $H$ is a prime ideal.
 Let $S\subset \mathbb{P}(\mathbb{C}y_1+\cdots +\mathbb{C}y_n)=\mathbb{P}^{n-1}$
 be the variety defined by $H$. In Fano's terminology this is called: {\it `the solutions of $L$ lie
 on $S$'}. Since the $y_i$ are linearly independent, $S$ does not lie in a proper projective subspace of $\mathbb{P}^{n-1}$.

  \begin{theorem}[Fano--Singer] $\ $\\
 Suppose that the solutions of the monic operator $L$ of degree $n$  lie on a curve. Then one of the following holds: \\
{\rm (a)} After a shift $\partial \mapsto \partial +v$, all the solutions
of $L$ are algebraic.\\
 {\rm (b)} $L$ is the $(n-1)$th symmetric power of an order 2 operator $L_2$.\\
 {\rm (c)} There is a monic operator $L_2$ of degree 2, having a basis of solutions $w_1,w_2$, such that the polynomial $P:=(X-\frac{w_1'}{w_1})(X-\frac{w_2'}{w_2})$  lies in $K[X]$ and
 there exists an integer $N$  and a sequence $0=i_1<i_2<\cdots <i_n=N$ with g.c.d. 1, such that
 $S=\{w_1^{i_k}w_2^{N-i_k}|\ k=1,\dots ,n\}\subset
 \{w_1^s w_2^t|\ s+t=N\}$ is a basis of solutions for $L$. Moreover,
 $S$ is invariant under the permutation $w_1\leftrightarrow w_2$
 if  $P$ is irreducible. \end{theorem}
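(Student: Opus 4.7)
Let $V = \mathbb{C} y_1 + \dots + \mathbb{C} y_n$ denote the $n$-dimensional solution space of $L$, $G = Gal(L) \subset \mathrm{GL}(V)$ the differential Galois group, and $\bar G \subset \mathrm{PGL}(V)$ the projective image. By construction the curve $S \subset \mathbb{P}^{n-1}$ is $\bar G$-stable. The proof proceeds by distinguishing three cases according to the action of the connected component $\bar G^o$ on $S$.

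First, suppose $\bar G^o$ acts trivially on $S$. Since $S$ spans $\mathbb{P}^{n-1}$, this forces $\bar G^o$ to act trivially on all of $\mathbb{P}^{n-1}$, so $G^o \subset \mathbb{C}^{*}\cdot I_V$ acts on $V$ by a single character $\chi$. This character is realised by a one-dimensional differential module $Ke$ with $\partial e = ve$, and the twist $\partial \mapsto \partial + v$ kills it, making the new Galois group finite; hence all solutions become algebraic, which is case (a).

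Otherwise $\bar G^o$ acts non-trivially on the irreducible curve $S$ and so has an open orbit there; the normalisation $\tilde S$ is birationally a homogeneous space of a connected linear algebraic group, hence rational: $\tilde S \cong \mathbb{P}^1$. Parameterise the normalisation morphism $\mathbb{P}^1 \to S \subset \mathbb{P}^{n-1}$ by $n$ linearly independent forms of a common degree $N$, $(s:t) \mapsto (F_1(s,t) : \dots : F_n(s,t))$, identifying $V$ with the $n$-dimensional subspace $W := \mathbb{C} F_1 + \dots + \mathbb{C} F_n \subset \mathbb{C}[s,t]_N$ as $\bar G^o$-modules. The induced homomorphism $\bar G^o \to \mathrm{PGL}_2 = \mathrm{Aut}(\mathbb{P}^1)$ has trivial kernel (any kernel element acts trivially on $S$, hence on $\mathbb{P}^{n-1}$), so $\bar G^o$ is a non-trivial connected subgroup of $\mathrm{PGL}_2$. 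If $\bar G^o = \mathrm{PGL}_2$, then lifting along $\mathrm{SL}_2 \to \mathrm{PGL}_2$ makes $\mathrm{SL}_2$ act on $W \subset \mathbb{C}[s,t]_N = sym^N(\mathbb{C}^2)$; but the right-hand side is irreducible as an $\mathrm{SL}_2$-representation, forcing $W = \mathbb{C}[s,t]_N$, and $V \cong sym^{n-1}$ of a $2$-dimensional representation. Descending to differential modules and taking a cyclic vector in the associated $2$-dimensional module $N_2$ yields $L = sym^{n-1}(L_2)$, which is case (b).

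It remains to treat $\bar G^o$ a non-trivial proper connected subgroup of $\mathrm{PGL}_2$; up to conjugation these are $\mathbb{G}_a$, a maximal torus $T$, or a Borel. A case-by-case analysis shows the effective case is $\bar G^o = T$: then $W$ decomposes into $T$-weight spaces, each a line spanned by a single monomial, so $F_k = s^{i_k} t^{N - i_k}$ with pairwise distinct $0 \le i_k \le N$. Ordering in increasing order and using that the normalisation $\mathbb{P}^1 \to S$ is birational gives $i_1 = 0$, $i_n = N$ and $\gcd(i_1,\dots,i_n) = 1$. The finite quotient $\bar G / \bar G^o$ acts on $\mathbb{P}^1$ as a subgroup of $N(T)/T \cong \mathbb{Z}/2$, either fixing both $T$-fixed points or swapping them; thus $\bar G \subset N(T)$. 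The tautological $2$-dimensional $N(T)$-representation descends to a $2$-dimensional differential module $N_2$ over $K$ (possibly after a finite extension), with solutions $w_1, w_2$; since $G$ permutes $\{w_1, w_2\}$ up to scalars, the polynomial $P(X) = (X - w_1'/w_1)(X - w_2'/w_2)$ is $G$-invariant and hence lies in $K[X]$. If $P$ is irreducible, the swap is non-trivial, forcing $S$ to be invariant under $w_1 \leftrightarrow w_2$, i.e.\ under $i \mapsto N - i$. The main technical obstacle is this last subcase: ruling out the non-reductive $\mathbb{G}_a$ and Borel subcases (where a unipotent factor forces an invariant flag that either produces the scalar reduction of case (a) after a further twist or contradicts the setup) and performing the Galois-theoretic descent to produce $N_2$, $w_1, w_2$ over $K$, parallel to the construction used in Theorem 1.3.
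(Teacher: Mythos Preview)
Your outline follows the same skeleton as the paper: pass to the projective image $\bar G\subset\mathrm{PGL}(V)$, embed $\bar G^o$ into $\mathrm{PGL}_2$ via the normalisation of the curve, and split according to which connected subgroup appears. Two points need repair.

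First, the Borel and $\mathbb{G}_a$ subcases do not drift back into (a); they are excluded outright. With the normalisation $\mathbb{P}^1\to S$ given by forms $f_1,\dots,f_n$ of common degree $d$ and $\gcd(f_i)=1$, the only Borel- or $\mathbb{G}_a$-invariant subspaces of $\mathbb{C}[s,t]_d$ are the members of the standard flag $\langle s^d,s^{d-1}t,\dots,s^{d-i}t^i\rangle$, and each proper one has $s$ as a common factor. In the non--rational-normal situation one has $n<d+1$, so the span $\langle f_1,\dots,f_n\rangle$ is proper, contradicting the $\gcd$ condition.

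Second, and this is the real gap, your descent from $\bar G$-data to an operator $L_2\in K[\partial]$ is not justified. You speak of ``identifying $V$ with $W$ as $\bar G^o$-modules'' and of the ``tautological $2$-dimensional $N(T)$-representation descending to a differential module over $K$'', but $\bar G$ sits in $\mathrm{PGL}$, not $\mathrm{GL}$: neither $V$ nor any two-plane carries a linear $\bar G$-action, so there is no Tannakian passage from this projective data to a differential module over $K$. The paper resolves this concretely inside the Picard--Vessiot field $PV$: it adjoins radicals (for instance $u$ with $u^{\,n-1}=y_1$, then $v$ with $u^{\,n-2}v=y_2$) to form $PV^+\supset PV$, checks that every $\sigma\in G$ extends to a differential $K$-automorphism of $PV^+$ and that $(PV^+)^{G^+}=K$, and exhibits the $G^+$-stable plane $\mathbb{C}u+\mathbb{C}v\subset PV^+$. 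The monic order-two operator with this solution space is then $G^+$-invariant, hence lies in $K[\partial]$; that is $L_2$. The same device, with $u^{\,d-a_i}v^{\,a_i}=y_i$, handles case (c). This hands-on lift is the substantive step of the proof, and Theorem~1.3 (which treats adjoint modules for connected groups under a $C_1$ hypothesis) is not an adequate substitute.
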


\begin{comments} {\rm
 Note that condition (a) does not at all imply that the solutions
 lie on a curve. Indeed, algebraic elements $y_1,\dots ,y_n$ over
 $K$ need not satisfy any non trivial  homogeneous relation
 over $\mathbb{C}$.
 In case (b), a basis of the solutions of $L$ can be written in the form
 $\{y_1^iy_2^{j}|\ 0\leq i,j;\ i+j=n\}$ where $y_1,y_2$ is a basis of the
 solutions of $L_2$. The curve is then the normal curve in
 $\mathbb{P}^{n-1}$.\\
 Case (c). Suppose that $P$ is reducible, i.e.,
 $v_i:=\frac{w_i'}{w_i}\in K$ for $i=1,2$. Let $S$ be a subset of
 $n$ elements of $\{w_1^sw_2^t|\ s+t=N\}$. Let $V$ denote the
 $\mathbb{C}$-vector space generated by $S$. This vector space is invariant
 under the differential Galois group and it   follows that there is a unique monic operator $L\in K[\partial ]$ with solution space $V$. Further, the  solutions of
 $sym ^NL_2$ lie on a normal curve in $\mathbb{P}^{N-1}$.
 The projection, given by the subset of the homogeneous coordinates, of this curve is a curve $\Gamma$ in $\mathbb{P}^{n-1}$. The solutions of $L$ lie on this curve.

Now suppose that $P$ is irreducible. Let $K^+=K(v_1,v_2)$ with
$v_i:=\frac{w_i'}{w_i}$. Let $S$ be a subset of $n$ elements of
$\{w_1^sw_2^t|\ s+t=N\}$, invariant under the permutation
$w_1\leftrightarrow w_2$. Let $W$ be the $\mathbb{C}$-vector space generated by $S$. Again, $V$ is invariant under the differential Galois group and, as above, there is a unique monic operator
$L\in K^+[\partial ]$ with solution space $W$. Since $S$ is invariant
under $w_1\leftrightarrow w_2$, the operator $L$ is invariant under
conjugation by the non trivial automorphism of $K^+/K$. Thus
$L\in K[\partial ]$. The solutions of $sym ^NL_2$ are still lying on a curve of genus 0 in $\mathbb{P}^{N-1}$ and its projection is again a curve in  $\mathbb{P}^{n-1}$.
We conclude that the converses of (b) and (c) hold.\\

 The condition on $L_2$, imposed in (c), can be translated into: The differential Galois group $G\subset {\rm GL}_2$ of $L_2$   is contained in  the group
  $\{{*\ 0\choose 0\ * }\}\cup \{{0\ *\choose *\ 0 }\}$. \\
  Let the polynomial  $X^2+cX+d=(X-v_1)(X-v_2)\in K[X]$ with discriminant
  $\Delta :=c^2-4d\neq 0$ be given. Then one can calculate that the corresponding  operator $L_2=\partial ^2+a\partial +b$ satisfies $a=c-\frac{\Delta '}{2\Delta}$ and $b=c+\frac{c'd-cd'}{\Delta}$. } \end{comments}

{\it The proof of the Fano--Singer theorem, revisited.}
\begin{proof}
Suppose that the solutions of $L$ lie on a curve $\Gamma \subset
\mathbb{P}(V)$, where $V=\mathbb{C}y_1+\cdots +\mathbb{C}y_n$ is the solution space. Then the differential Galois group $G\subset {\rm GL}(V)$
has image  $pG\subset {\rm PGL}(V)$ which acts as a group of
automorphisms of $\Gamma$. This action of $\Gamma$ is faithful
since $\Gamma$ does not lie in a proper projective subspace of
$\mathbb{P}(V)$. If the genus of the normalization of $\Gamma$
is $>1$ or if its genus is 1 and $\Gamma$ has a singular point, then $pG$ is finite. Then case (a) holds. If $\Gamma$ is non singular of
genus 1, then $(pG)^o$ is contained in $\Gamma$ seen as a group.
However the group $\Gamma$ is projective and $(pG)^o$ is affine.
Thus $(pG)^o=1$ and $pG$ is finite. This is again case (a).

Suppose that the normalization of $\Gamma$ has genus 0.
Let $\phi :\mathbb{P}^1\rightarrow \mathbb{P}^{n-1}$ be the morphism from the normalization of $\Gamma$ to $\Gamma$.  Then $\phi$ has
the form $(s:t)\mapsto (f_1:\cdots :f_n)$, where $f_1,\dots ,f_n$ are
homogeneous polynomials of the same degree in $s,t$ and the g.c.d.
of $f_1,\dots ,f_n$ is 1. Since $\Gamma$ is not contained in any
hypersurface, the $f_1,\dots ,f_n$ are linearly independent over $\mathbb{C}$.
As $\phi :\mathbb{P}^1\rightarrow \Gamma$ is birational, one has
$\mathbb{C}(\frac{t}{s})=\mathbb{C}(\frac{f_2}{f_1},\dots ,\frac{f_n}{f_1})$.
Let $d$ denote the degree of $\Gamma$. Then the $f_i$ have degree
$d$. Hence $n\leq d+1$.\\

{\it Consider the case $n=d+1$}. After a change of coordinates in
$\mathbb{P}^{n-1}$ we may suppose that $\phi$ has the form
$(s:t)\mapsto (s^{n-1},s^{n-2}t:\cdots :st^{n-2}:t^{n-1})$ and $\Gamma$
is the `rational normal curve'. Put $y_1=s^{n-1},\ y_2=s^{s-2}t,
\cdots,  \ y_n=t^{n-1}$. Then this basis of the solution space of $L$
has the relations $y_1y_3-y^2=0,\ y_2y_4-y_3^2=0,$ etc.. It follows that the differential Galois group $G$ of $L$ is a subgroup of
$Z_n\cdot sym ^{n-1}(GL_2(C))$, where $Z_n=\{\lambda \cdot 1|\ \lambda ^n=1\}$.

Write $PV\supset K$ for the Picard-Vessiot field of $L$ over
$K$. Consider the extension $PV^+=PV(u)$ of $PV$, defined by
the equation $u^{n-1}=y_1$. Define $v$ by $u^{n-2}v=y_2$. Then
$y_i=u^{n-i}v^{i-1}$ for all $i=1,\dots ,n$.  Let $G^+$ be the group of the differential automorphism  of $PV^+/K$.  Every element
$\sigma \in G$ has an extension $\sigma ^+$ to $G^+$. Indeed,
$\sigma (y_1)$ has the form $(au+bv)^{n-1}$ and the formula
$\sigma ^+(u)=\zeta (au+bv)$ (any $\zeta$ with $\zeta ^{n-1}=1$) produces $\sigma ^+$. Then one finds the following exact sequence

\[1\rightarrow Gal(PV^+/PV)\rightarrow G^+\rightarrow G
\rightarrow 1 \ .\]
Consider the vector space $W=\mathbb{C}u+\mathbb{C}v$ and the unique monic differential operator $L_2\in PV^+[\partial ]$ of degree 2 with
solution space $W$. Since $W$ is invariant under the action
of $G^+$, so is $L_2$. Further, one easily verifies that
 $(PV^+)^{G^+}=K$. Hence $L_2\in K[\partial ]$. Now clearly,
 $L$ is the $(n-1)$th symmetric power of $L_2$. \\

 \noindent
{\it Consider the case $d+1>n$}. The group $pG$ acts faithfully on
$\Gamma$ and this induces an action of $pG$ on the normalization
$\mathbb{P}^1$ of $\Gamma$. This embeds $pG$ into ${\rm PGL}_2$
with its usual action on $\mathbb{P}(\mathbb{C}s+\mathbb{C}t)$ and induced action on $\mathbb{P}(\mathbb{C}s^d+\mathbb{C}s^{d-1}t+\cdots +\mathbb{C}t^d)$. The projective subspace
$\mathbb{P}(\mathbb{C}f_1+\dots +\mathbb{C}f_n)$
 is invariant under $pG$
and under $(pG)^o$. The possibilities for $(pG)^o\neq 1$ are:
$\{ {*\ * \choose 0\ 1} \},\  \{  {1\ *\choose 0\ 1 } \},\  \{ {*\ 0\choose 0\ 1 } \}$.

In the first case $pG=(pG)^o$ and the invariant subspace under $pG$
are: $< \{s^at^b|\ a+b=d,\ b\leq i \}>$ for $i=0,1,\dots ,d$. If
 $<f_1,\dots ,f_n>$ is a proper subspace, then the g.c.d. of
 $f_1,\dots ,f_n$ is not 1, contradicting the form of $\phi$. The second
 case is excluded in the same way.

The third case has two subcases:
$pG=(pG)^o$ and $[pG:(pG)^o]=2$. \\
In the first subcase, the $pG$ invariant subspaces of dimension $n$
(with g.c.d. 1) have the form
$<\{y_i:=s^{d-a_i}t^{a_i}|\  0=a_1<a_2<\cdots <a_n=d \}>$. As before
one makes an extension $PV^+=PV(u,v)$ of the Picard-Vessiot
field $PV$ of $L$ over $K$ by equations
$u^{d-a_i}v^{a_i}=y_i$ for $i=1,\dots ,n$. The subspace
$\mathbb{C}u+\mathbb{C}v$
is invariant under the action of the group $G^+$ and yields the
required monic operator $L_2\in K[\partial ]$ of degree 2 with solution
space $\mathbb{C}u+\mathbb{C}v$. \\
In the second subcase, one shows that $pG=<(pG)^o, \
{0\ 1\choose 1\ 0}>$. This poses the extra condition on the
$pG$ invariant  subspaces, namely: for every $i$ there is a $j$ with $d-a_i=a_j$. Now one proceeds as in the first subcase. \end{proof}

\section{Differential modules with Lie algebra $\frak{sl}_3$}

According to Section 1, the {\it essential case} to consider is a differential module $N$ with $\det N={\bf 1}$ and $(\frak{gal}(N),V(N))=(\frak{sl}_3, [1,1])$.

 We  make the following observation.
If $N$ is the direct summand of ${\rm End}(M)$ for some $M$ with dimension
$3$,  then ${\rm SL}_3\subset Gal(M)\subset {\rm GL}_3$ and $Gal(M)$ acts as $PSL_3$
on $N$. Hence $N$ satisfies this property if and only if $Gal(N)$ is connected.

 In general, $Gal(N)$ is not connected and we will compute the minimal field extension $K^+\supset K$ such that $Gal(K^+\otimes _KN)$ is connected.

We identify $V(N)$ with $\frak{sl}_3$. The group $Gal(N)$ is contained in the normalizer
$G^+$ of $G^o:=Gal(N)^o={\rm PSL}_3$ in ${\rm SL}(\frak{sl}_3)$. The element
$\sigma \in {\rm GL}(\frak{sl}_3)$, defined by $\sigma (A)=-A^t$ for all $A\in \frak{sl}_3$,
has the property $\sigma G^o\sigma ^{-1}=G^o$ and $\sigma$ maps to the non trivial
element of $Out({\rm PSL}_3)$. The determinant of $\sigma $ is $-1$ and thus $\tau$, defined by $\tau =e^{2\pi i/16}\sigma$, lies  in $G^+$. Let $h$ be any element of $G^+$. Then, after multiplying $h$
by $\tau ^\epsilon$ with $\epsilon \in \{0,1\}$, we may suppose that the image of $h$ in
$Out({\rm PSL}_3)$ is 1. Thus there is an element $s\in {\rm PSL}_3$ with $hgh^{-1}=
sgs^{-1}$ for all $g\in G^o$. Since the $G^o$-representation $\frak{sl}_3$ is irreducible,
$s^{-1}h=\lambda \cdot 1$ with $\lambda ^8=1$. It follows that  $G^+$ is generated by
$\tau$ and $G^o$ and $[G^+:G^o]=16$. The possible groups $Gal(N)$
 satisfy $G^o\subset Gal(N)\subset G^+$ and are determined by the integer
 $d:=[Gal(N):Gal(N)^o]$ dividing 16.

The map $[\ ,\ ]:\Lambda ^2\frak{sl}_3 \rightarrow \frak{sl}_3$ is $G^o$-invariant and one verifies that $\tau ([\ ,\ ])=e^{2\pi i/16}\cdot [ \ ,\ ]$.  The $G^+$-module  ${\rm Hom}(\Lambda ^2\frak{sl}_3,\frak{sl}_3)$ has a unique 1-dimensional submodule, namely $\mathbb{C}[\ ,\ ]$. On this module $\tau$ acts as multiplication by
$e^{2\pi i/16}$.Then ${\rm Hom}(\Lambda ^2N,N)$ has a unique 1-dimensional submodule $L$. Now  $d$ is minimal such that $L^{\otimes d}={\bf 1}$. The $d$-cyclic extension $K^+\supset K$, defined by $L$, has the property that $K^+\otimes _KN$ is an adjoint module.  The algorithm of Theorem 1.3 computes the 3-dimensional differential module $M$ over
$K^+$ with differential Galois group ${\rm SL}_3$ such that ${\rm Hom}(M,M)=K^+\otimes N$.

\bigskip

\noindent  {\it An alternative method for the $\frak{sl}_3$ case}. \\
Consider the differential module $N\otimes N^*$. The corresponding $\frak{sl}_3$-module is $[3,0]\oplus [0,3]\oplus [2,2]\oplus [1,1]\oplus [1,1]\oplus [0,0]$.
The term $[3,0]\oplus [0,3]$ is invariant under the action of $G^+$, more precisely $\tau$ interchanges the terms $[3,0], [0,3]$ and $\tau ^2$ is the identity. Let $A$ be the submodule of $N\otimes N^*$ corresponding to $[3,0]\oplus [0,3]$. {\it If $\tau$ belongs to} $Gal(N)$, then one computes with the method of Subsection 1.1,  the quadratic extension
$K_2\supset K$ such that $K_2\otimes A$ splits  as a direct sum. Let $B$ be the direct summand corresponding to $(\frak{sl}_3,[3,0])$. If $\tau$ does not belong to $Gal(N)$, then we write $K_2=K$.

Now $Gal(B)={\rm PSL}_3$. Using the following constructions of linear algebra:  $\Lambda ^2[3,0]=[4,1]\oplus [0,3]$ and $[3,0]\otimes [0,3]=
[0,0]\oplus [1,1]\oplus [2,2]\oplus [3,3]$, one obtains a module $C$ corresponding to
$(\frak{sl}_3,[1,1])$ and $Gal(C)={\rm PSL}_3$. Thus $C$ is an adjoint module over $K_2$
and is induced by a standard module $M$ over $K_2$ for ${SL}_3$. The adjoint module
$\tilde{N}$ induced by $M$  is over the algebraic closure of $K_2$ isomorphic to the
input module $N$. Using Subsection 1.1 one finds the required extension $K^+$ of $K_2$. \\


\noindent {\it Test}. Let the input module $N$ be an absolutely irreducible  differential module with $\dim N=8,\ \det N={\bf 1}$. Then $(\frak{gal}(N),V(N))\cong (\frak{sl}_3,[1,1])$ if and only if  the irreducible direct summands of $sym^2N$ have dimensions $1,8,27$ and  the irreducible direct summands of $\Lambda ^2N $ have dimensions $8,10,10$, or $8,20$.\\

 \noindent {\it The above construction  generalizes
  to the case of a module $N$ where the representation
 $(\frak{g}(N),V(N)$ is the adjoint representation of $\frak{sl}_n$}. \\
 The element $\sigma$,
 as defined above, has determinant 1 for $n\equiv 1,2\mod 4$ and $-1$ for $n\equiv 0,3
 \mod 4$. In the first case, the normalizer $G^+$ of ${PSL}_n$ in ${\rm SL}(\frak{sl}_n)$ is generated by $\{\sigma, \ e^{2\pi i/(n^2-1)}\cdot 1\}$ and ${PSL}_n$. In the second case
 $G^+$ is generated by $\tau =e^{2\pi i/(2n^2-2)}\cdot \sigma$ and ${\rm PSL}_n$.
 In both cases $[G^+:{\rm PSL}_n]=2(n^2-1)$.

 {\it Surprisingly enough}, we have to distinguish the two cases $\det \sigma =1$ and
 $\det \sigma =-1$. In {\it the second case}, the group $G^+/{\rm PSL}_n$ acts faithfully
 on the 1 dimensional vector space $\mathbb{C}[\ ,\ ]$. The corresponding unique
 1-dimensional factor $L$ of ${\rm Hom}(\Lambda ^2N,N)$ yields, as before, the required
 extension $K^+$.

 In {\it the first case} the action of $G^+/{\rm PSL}_n$ on $\mathbb{C}[\ ,\ ]$ has kernel
 $\{1,\sigma\}$. If $\sigma$ lies in the image of $Gal(N)/Gal(N)^o$, then we first want
 to determine the quadratic extension $K_2\supset K$ which kills $\sigma$ (in other words, $\sigma$ does not lie in the image of $Gal(K_2\otimes N)/Gal(K_2\otimes N)^o$).
The alternative method for $\frak{sl}_3$ works here as well. Indeed, the module
$N\otimes N^*$ contains a unique irreducible direct summand $A$ which corresponds
to the $\frak{sl}_n$-module $[2,0,\dots ,0,1,0]\oplus [0,1,0,\dots ,0,2]$. We note that
$\sigma$ interchanges the two factors. The other irreducible summands of $N\otimes N^*$
correspond to irreducible $\frak{sl}_n$-modules which are invariant under $\sigma$.
The quadratic extension $K_2$ of $K$ for which $K_2\otimes A$ is a direct summand
of two irreducible submodules is the extension which kills $\sigma$. Now we replace
 $K$ by $K_2$, in case $\sigma$ lies in the image of $Gal(N)/Gal(N)^o$, and proceed as in the first case.

\section{Differential modules with Lie algebra $\frak{sl}_4$}
There are three connected linear algebraic groups with Lie algebra $\frak{sl}_4$,
namely ${\rm SL}_4,\ {\rm SL}_4/\mu _2={\rm SO}_6,\ {\rm SL}_4/\mu _4={\rm PSL}_4$. They correspond to
the  category $Repr_{\frak{sl}_4}=\{\{[1,0,0]\}\}$  and the two Tannakian
subcategories $\{\{[0,1,0]\}\}$ and $\{\{[1,0,1]\}\}$. The irreducible objects of the second category are the $[a,b,c]$ with $a-c+2b\equiv 0,2\mod 4$ and for the third category these objects are $[a,b,c]$ with $a-c+2b\equiv 0 \mod 4$. As noted before, the group
$Out({\rm SL_4})$ has two elements and  the non trivial element $\sigma$ in this group changes the representation $[a,b,c]$ into its dual $[c,b,a]$.
In Section 3 we treated the case of a module which induces the $\frak{sl}_4$-module
$[1,0,1]$ and this solves in principle the problem. For modules $P$ with
$\det P={\bf 1}$ such that $(\frak{gal}(P),V(P))$ is an irreducible object of $\{\{[0,1,0]\}\}$
not lying in $\{\{[1,0,1]\}\}$,  there is a construction of linear algebra producing a module
$M$ satisfying $\det M={\bf 1}$ and  $(\frak{gal}(M),V(M))=(\frak{sl}_4,[0,1,0])$. And
there is a construction of linear algebra from $M$ to $P$  (or maybe to a module $\tilde{P}$ which is isomorphic to $P$ over $\overline{K}$). \\
{\it Example}. Consider $P$ with $\det P={\bf 1}$ and
$(\frak{gal}(P),V(P)) =(\frak{sl}_4, [2,0,0])$. To obtain $M$, one makes for instance the steps:  $sym^2[2,0,0]=[4,0,0]\oplus [0,2,0]$; $[2,0,0]\otimes [0,2,0]$ has direct summand $[1,1,1]$; $[2,0,0]\otimes [1,1,1]$ contains the direct summand $[1,0,1]$; $[2,0,0]\otimes [1,0,1]$ contains the term $[0,1,0]$.\\
Now we describe a shortcut for modules of the above type $M$.\\

\noindent {\it Shortcut}. Let the differential module $M$ satisfy $\det M={\bf 1}$
and $(\frak{gal}(M),V(M))=(\frak{sl}_4,[0,1,0])$. We note that $[0,1,0]=\Lambda ^2[1,0,0]$
and thus $Gal(M)^o={\rm SL}_4/\mu _2$. The canonical morphism
$f: [0,1,0]\otimes [0,1,0]\rightarrow \Lambda ^4[1,0,0]=\mathbb{C}$ is a non degenerate
symmetric form. The group ${\rm SL}_4/\mu _2$ identifies with ${\rm SO}(f)\cong
{\rm SO}_6$. The normalizer of ${\rm SO}_6$ in ${\rm SL}_6$ is $\mu _3\cdot {\rm SO}_6$. Thus ${\rm SO}_6=Gal(M)^o\subset Gal(M)\subset \mu _3Gal(M)^o$. The module
$sym ^2M$ has a unique 1-dimensional submodule $L$ corresponding to
$\mathbb{C}f\subset sym^2[0,1,0]$. If $Gal(M)=Gal(M)^o$, then $L=KF$ where
$F$ is a non degenerate symmetric form on $N$ with $\partial F=0$. If $Gal(M)\neq
Gal(M)^o$, then $L$ determines a cyclic extension $K_3\supset K$ such that
$K_3\otimes _KL=K_3F$ where $F$ is a non degenerate symmetric form and
$\partial F=0$. The following theorem, which is an algorithm, finishes the description
of the shortcut.

 \begin{theorem} Let $M$ be a differential module of dimension 6.
 The following properties of $M$ are equivalent (no conditions on $M$ and $K$).\\
 {\rm (1)} $M\cong \Lambda ^2N$ for some module of dimension 4 with
 $\det N={\bf 1}$.\\
 {\rm (2)} There exists $F\in sym^2M$ with $\partial F=0$ such that
 $F$ is non degenerate and $M$ has a totally isotropic subspace of dimension 3.
 \end{theorem}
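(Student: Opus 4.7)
The plan is to prove the two directions separately. $(1)\Rightarrow(2)$ is straightforward, while the substance lies in $(2)\Rightarrow(1)$, which mirrors the argument of Theorem~2.1 and reflects the exceptional isomorphism $\frak{so}_6\cong \frak{sl}_4$ induced by $\Lambda ^2$.

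For $(1)\Rightarrow(2)$: Assume $M=\Lambda ^2N$ with $\det N={\bf 1}$, so $\Lambda ^4N\cong {\bf 1}$ canonically. The wedge product gives a symmetric non-degenerate $\partial$-invariant pairing $M\otimes M\rightarrow {\bf 1}$; using the self-duality $M\cong M^*$ furnished by this pairing, it corresponds to an element $F\in sym^2M$ with $\partial F=0$ and $F$ non-degenerate. For the isotropic subspace, pick any nonzero $n\in N$ and let $\Sigma :=n\wedge N\subset M$: then $\dim _K\Sigma =3$, and $(n\wedge a)\wedge (n\wedge b)=0$ for all $a,b\in N$ shows $\Sigma$ is totally isotropic.

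For $(2)\Rightarrow(1)$, write $\Sigma \subset M$ for the given isotropic subspace. First I find a complementary $3$-dimensional totally isotropic $\Sigma '$ with $M=\Sigma \oplus \Sigma '$: pick any $K$-complement $W$ of $\Sigma$; since $\Sigma ^\perp =\Sigma$ the form $F$ pairs $\Sigma$ and $W$ perfectly, and (using $\mathrm{char}\,K\neq 2$) I adjust each basis vector $w_i$ of $W$ by an element $c_i\in \Sigma$ so that the resulting subspace is isotropic. Next I choose bases $m_{12},m_{13},m_{14}$ of $\Sigma$ and $m_{34},m_{24},m_{23}$ of $\Sigma '$, with signs so that $F=m_{12}\cdot m_{34}-m_{13}\cdot m_{24}+m_{14}\cdot m_{23}$ (the canonical Pl\"ucker form). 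Now let $N$ be a $4$-dimensional $K$-vector space with formal basis $n_1,\dots ,n_4$, and define the $K$-linear isomorphism $\phi :\Lambda ^2N\rightarrow M$ by $\phi (n_i\wedge n_j):=m_{ij}$.

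It remains to find a trace-zero matrix $(\alpha _{ji})$ such that $\partial n_i:=\sum _j\alpha _{ji}n_j$ makes $\phi$ commute with $\partial$. Expanding
\[\partial (n_i\wedge n_j)=(\alpha _{ii}+\alpha _{jj})n_i\wedge n_j+\sum _{k\neq i,j}\alpha _{ki}\,n_k\wedge n_j+\sum _{l\neq i,j}\alpha _{lj}\,n_i\wedge n_l\]
and writing $\partial _Mm_{ij}=\sum _{k<l}\gamma _{ij,kl}m_{kl}$, one reads each $\alpha _{ki}$ ($k\neq i$) off from a coefficient of type $\gamma _{ij,kj}$ and each $\alpha _{ii}+\alpha _{jj}$ from $\gamma _{ij,ij}$. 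The main obstacle is to verify that the linear conditions coming from $\partial F=0$ in $sym^2M$ are precisely those needed (i) to enforce the Pl\"ucker vanishing $\gamma _{ij,kl}=0$ when $\{i,j\}\cap \{k,l\}=\emptyset$, (ii) to make the two readings of each off-diagonal $\alpha _{ki}$ coming from different $j$'s agree, and (iii) to produce the trace-zero condition $\sum _i\alpha _{ii}=0$ (which gives $\det N={\bf 1}$). At the Lie algebra level this is the classical exceptional isomorphism $\frak{so}_6\cong \frak{sl}_4$ via the Pl\"ucker embedding, with matching dimensions $\dim \frak{so}_6=15=\dim \frak{sl}_4$; making it $K$-linearly explicit in the chosen basis is analogous to but more intricate than the bookkeeping in the proof of Theorem~2.1.
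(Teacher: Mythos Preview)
Your proposal is correct and follows essentially the same route as the paper: both directions normalize $F$ to the Pl\"ucker form $m_{12}m_{34}-m_{13}m_{24}+m_{14}m_{23}$, define the same $K$-linear isomorphism $\Lambda ^2N\rightarrow M$, and reduce $(2)\Rightarrow (1)$ to solving an over-determined linear system for the connection matrix on $N$ whose consistency is guaranteed by $\partial F=0$. The only difference is one of presentation: the paper writes out the relations coming from $\partial F=0$ and the unique solution $(\beta _{i,j})$ explicitly (so the result is algorithmic), whereas you package the same verification as the exceptional isomorphism $\frak{sl}_4\cong \frak{so}_6$---valid as a statement of linear algebra over any field of characteristic $\neq 2$---to conclude existence and uniqueness of the trace-zero solution without doing the bookkeeping.
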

\begin{proof}
(1)$\Rightarrow$(2). Choose a basis $n_1,n_2,n_3,n_4$ of $N$ such that the corresponding matrix of $\partial$ has trace 0.  Then
 $\{n_{i,j}:=n_i\wedge n_j| \ 1\leq i<j\leq 4\}$ is a basis of
 $M=\Lambda ^2N$. The element $F=n_{12}n_{34}-n_{13}n_{24}+
n_{14}n_{23}$ is clearly non degenerate and has a totally isotropic subspace of dimension 3 over $K$. The operation $\partial$ on $N$
is given by a matrix $(\beta _{i,j})$, i.e., $\partial n_i=\sum _j\beta _{j,i}n_j$, such that
$\sum \beta _{i,i}=0$. A straightforward computation shows that $\partial F=0$.\\
\noindent (2)$\Rightarrow$(1). By assumption $F$ can be written in the form $m_1m_2+m_3m_4+m_5m_6$ for a suitable basis
$m_1,\dots ,m_6$ of $M$. For notational reason we write
$F=m_{12}m_{34}-m_{13}m_{24}+m_{14}m_{23}$ for a basis
$m_{12},\dots ,m_{34}$ of $M$. Let $(\alpha _{ij,kl})$ be the matrix of
$\partial$ on $M$ with respect to this basis, i.e., $\partial m_{ij}=
\sum \alpha _{kl,ij}m_{kl}$. The equality $\partial F=0$ is equivalent
to the set of equalities
\[\alpha _{ij,kl}=0 \mbox{ if } \{i,j,k,l\}=\{1,2,3,4\} \ ;\
 \alpha _{ij,ij}+\alpha _{kl,kl}=0\mbox{ for } ij\neq kl \ ;\]
\[\alpha _{ik,jk}=\pm \alpha _{ik',jk'}\mbox{ if }\{i,j,k,k'\}=\{1,2,3,4\}\mbox{ and} \]
the sign is $-$ for $\{i,j\}=\{1,3\},\ \{2,4\}$ and is $+$ for the other tuples
$\{i,j\}$. (Note that in the last set of relations we do not insist on
$i<k,\ j<k$ etc.).\\

We consider a vector space $N$ over $K$ with basis $n_1,\dots ,n_4$
and define the $K$-linear bijection $f:\Lambda ^2N\rightarrow M$ by
sending $n_{ij}:=n_i\wedge n_j$ to $m_{ij}$ for all $1\leq i<j\leq 4$.
On $N$ we consider an operation of $\partial$ given by a
matrix $(\beta _{i,j})$ (as above). The condition that $f$ is an isomorphism of differential modules is equivalent to a set of equations for the $\beta _{i,j}$. This set of equations can be computed to be
\[\beta _{i,i}+\beta _{j,j}=\alpha _{ij,ij}\mbox{ for } 1\leq i<j\leq 4\ ,  \
 \beta _{a,b}=\alpha _{aj,bj} \mbox{ if } a<b,\ a<j,\ b<j  \  ; \]
\[ \beta _{a,b}=-\alpha _{ai,ib}\mbox{ if } a<b,\ a<i,\ i<b\  ; \
 \beta _{a,b}=\alpha _{ia,ib}\mbox{ if } a<b, \ i<a,\ i<b \  ;\]
\[\beta _{a,b}=\alpha _{aj,bj}\mbox{ if } b<a,\ a<j,\ b<j\ ; \
\beta _{a,b}=-\alpha _{ja,bj}\mbox{ if } b<a,\ j<a,\ b<j\ ; \]
\[\beta _{a,b}=\alpha _{ia,ib}\mbox{ if } b<a,\ i<a,\ i<b \ .\]
This over-determined set of equations has a unique solution. Indeed,
one finds
\[\beta _{1,1}=(\alpha _{12,12}+\alpha _{13,13}-\alpha _{23,23})/2,\
\beta _{2,2}=(\alpha _{23,23}+\alpha _{24,24}-\alpha _{34,34})/2\ ,\]
\[\beta _{3,3}=(\alpha _{23,23}+\alpha _{34,34}-\alpha _{24,24})/2,\
\beta _{4,4}=(\alpha _{24,24}+\alpha _{34,34}-\alpha _{23,23})/2\ .\]
For each $a\neq b$ the above list gives two equations for
$\beta _{a,b}$. The two equations coincide, due to the relations
 $\alpha _{ik,jk}=\pm \alpha _{ik',jk'}$, listed above. \end{proof}

\begin{corollary} Let the differential field $K$ be a $C_1$-field and
let $M$ be an irreducible differential module of dimension 6. Suppose
that there exists an $F\in sym^2M$ with $F\neq 0$ and $\partial F=0$.
Then there exists an extension $K^+\supset K$ of degree $\leq 2$ and a differential module $N$ over $K^+$ such that $K^+\otimes _K M$ is isomorphic to $\Lambda ^2N$. Moreover $\det N={\bf 1}$.
\end{corollary}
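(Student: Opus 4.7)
The plan is to reduce the corollary to an application of Theorem~4.1 after a controlled extension of $K$ of degree at most~$2$. First I would observe that $F$ is automatically non-degenerate: $F\in sym^2M$ induces a symmetric bilinear form on $M^*$ whose radical is a differential submodule (because $\partial F=0$), and since $M^*$ is irreducible and $F\neq 0$ this radical must vanish. This repeats the argument of Corollary~2.2. The only remaining hypothesis of Theorem~4.1 is the existence of a $3$-dimensional totally isotropic subspace, and that is exactly what the field extension will provide.

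Next, I would use the $C_1$-property of $K$ to analyze the Witt decomposition of $F$. Being non-degenerate in $6$ variables over a $C_1$-field, $F$ has a non-trivial zero and a hyperbolic plane splits off; the resulting $4$-dimensional form is again isotropic by $C_1$, yielding a second hyperbolic plane. What remains is a non-degenerate $2$-dimensional form $F''$ whose determinant equals $\det F$ modulo squares, since each hyperbolic plane contributes a factor of $-1$ and the product of two such factors is $1$. A $2$-dimensional form $ax^2+by^2$ is isotropic iff $-ab$ is a square, so $F$ attains Witt index~$3$ already over $K$ precisely when $-\det F\in K^{*2}$.

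Accordingly, I would set $K^+:=K$ when $-\det F\in K^{*2}$ and $K^+:=K(\sqrt{-\det F})$ otherwise; the latter is a degree-$2$ algebraic, hence canonically differential, extension of $K$. Over $K^+$ the residual form $F''$ becomes isotropic and therefore hyperbolic, so $F$ on $K^+\otimes_KM$ is an orthogonal sum of three hyperbolic planes and carries a $3$-dimensional totally isotropic subspace. Applying Theorem~4.1 over $K^+$ to the extended module and the extended $F$ (still non-degenerate and satisfying $\partial F=0$) now produces a $4$-dimensional differential module $N$ over $K^+$ with $\det N=\mathbf{1}$ and $K^+\otimes_KM\cong\Lambda^2N$.

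The main obstacle is the discriminant bookkeeping in the middle step: the $C_1$-property alone forces only Witt index at least~$2$, and the class of $-\det F$ in $K^*/K^{*2}$ is precisely the obstruction to achieving Witt index~$3$ over $K$. Adjoining $\sqrt{-\det F}$ tautologically kills this obstruction, which is why degree at most~$2$ always suffices. The assertion $\det N=\mathbf{1}$ requires no separate argument: it is already built into the construction of Theorem~4.1, where the diagonal entries $\beta_{i,i}$ sum to zero by virtue of the relations $\alpha_{ij,ij}+\alpha_{kl,kl}=0$.
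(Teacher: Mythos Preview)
Your proof is correct and follows essentially the same route as the paper's: show $F$ is non-degenerate via irreducibility of $M^*$, use the $C_1$-property to split off two hyperbolic planes, and pass to an at most quadratic extension to make the residual binary form isotropic before invoking Theorem~4.1. The only cosmetic difference is that you phrase the obstruction invariantly as the class of $-\det F$ in $K^*/K^{*2}$, whereas the paper writes $F=m_1m_2+m_3m_4+am_5^2+bm_6^2$ and adjoins $\sqrt{b/a}$; since $\sqrt{-1}\in K$ under the paper's standing assumption $\mathbb{C}\subset K$, these amount to the same quadratic extension.
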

\begin{proof} $F$ yields a symmetric bilinear form on $M^*$ defined by
$(a,b)=F(a\otimes b)$. The property $\partial (a,b)=(\partial a,b)+(a,\partial b)$
follows from $\partial F=0$. The $K$-vector space
$\{a\in M^*|\ (a,M^*)=0\}$ is invariant under $\partial$. Since $M^*$ is irreducible one
finds that this space is 0 and $F$ is non degenerate.

 Using that $K$ is a $C_1$-field one finds an expression
 $m_1m_2+m_3m_4+am_5^2+bm_6^2$ for $F$ with $a,b\in K^*$.
 If $b/a$ is a square in $K^*$, then $F$ has a totally isotropic subspace
 of dimension 3 and one can apply the theorem.

 In the other case, $F$ has a totally isotropic subspace of dimension 3 after tensorization with the field  $K^+:=K(\sqrt{b/a})$.    Finally,  the $\beta _{ii}$, found in the proof of the Theorem 4.1, satisfy  $\sum _{i=1}^4\beta _{i,i}=0$ and thus $\det (N)={\bf 1}$.   \end{proof}

\section{Differential modules with Lie algebra $\frak{sp}_4$}

The two groups associated to $\frak{sp}_4$ are ${\rm Sp}_4$ and
${\rm Sp}_4/\mu _2 ={\rm SO}_5$. Apart from operations of linear algebra in the category
$Repr_{\frak{sp}_4}$ we have to consider differential modules $M$ with $\det M={\bf 1}$ and $(\frak{gal}(M),V(M))=(\frak{sp}_4,[0,1])$.

\begin{proposition}[Test]  Let $M$ be absolutely irreducible of dimension 5 and
$\det M=1$.
The pair $(\frak{gal}(M), V(M))$ is isomorphic to $(\frak{sp}_4,[0,1])$ if and only if $sym ^2M$ is a direct sum of two irreducible spaces of dimensions $1,14$.\end{proposition}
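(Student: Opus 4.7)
The plan is to prove both directions separately. The forward direction is immediate from the table in Subsection 1.2: the row for $\mathfrak{sp}_4$ with representation $[0,1]$ lists $sym^2 = [0,2], [0,0]$, which are irreducible of dimensions $14$ and $1$ respectively.

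For the converse, I would first extract from the one-dimensional direct summand of $sym^2 M$ a non-degenerate symmetric form preserved by $\mathfrak{gal}(M)$. Since $\det M = \mathbf{1}$ and $M$ is absolutely irreducible, $\mathfrak{gal}(M)$ is semisimple and $V(M)$ is an irreducible $\mathfrak{gal}(M)$-module. The one-dimensional direct summand of $sym^2 M$ corresponds (via the Tannakian equivalence) to a one-dimensional $\mathfrak{gal}(M)$-stable subspace of $sym^2 V(M)$; semisimplicity forces $\mathfrak{gal}(M)$ to act trivially on this line. This yields a nonzero $\mathfrak{gal}(M)$-invariant element of $sym^2 V(M)$, equivalently a nonzero $\mathfrak{gal}(M)$-invariant symmetric bilinear form $B$ on $V(M)^*$. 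By irreducibility of $V(M)^*$, the radical of $B$ is either $0$ or all of $V(M)^*$; as $B \neq 0$, it is non-degenerate, and hence $\mathfrak{gal}(M) \hookrightarrow \mathfrak{so}(V(M)^*) \cong \mathfrak{so}_5 \cong \mathfrak{sp}_4$.

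The next step is to classify the semisimple subalgebras of $\mathfrak{sp}_4$ that act faithfully and irreducibly on a $5$-dimensional space. A semisimple Lie algebra with a faithful $5$-dimensional irreducible representation can only be $\mathfrak{sl}_2$ (with $[4]$) or $\mathfrak{sp}_4$ (with $[0,1]$): the only semisimple candidates of rank $\leq 2$ are $\mathfrak{sl}_2$, $\mathfrak{sl}_2\times\mathfrak{sl}_2$, $\mathfrak{sl}_3$, $\mathfrak{sp}_4$, $\mathfrak{g}_2$, and only the two listed produce an irreducible $5$-dimensional representation (note that $5$ admits no non-trivial factorization, ruling out $\mathfrak{sl}_2\times\mathfrak{sl}_2$). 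To distinguish the remaining two cases, I would use the $\mathfrak{sl}_2$ row of the table, which gives $sym^2[4] = [8] \oplus [4] \oplus [0]$ with irreducible summands of dimensions $9, 5, 1$, contradicting the hypothesis. Hence $\mathfrak{gal}(M) = \mathfrak{sp}_4$, and since $[0,1]$ is the unique $5$-dimensional irreducible representation of $\mathfrak{sp}_4$, one concludes $(\mathfrak{gal}(M), V(M)) \cong (\mathfrak{sp}_4, [0,1])$.

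The main obstacle is the classification step: one must be certain that no other semisimple subalgebra of $\mathfrak{sp}_4$ acts irreducibly on a $5$-dimensional space, and then distinguish $\mathfrak{sp}_4$ from the principal $\mathfrak{sl}_2$ embedding. Both parts are disposed of by the dimension count and by consulting the $sym^2$ decomposition already tabulated in Subsection 1.2; every other step (producing the invariant form, deducing non-degeneracy, identifying the $14$-dimensional summand with $[0,2]$) is routine.
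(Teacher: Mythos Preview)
Your proof is correct and, like the paper's, ultimately rests on the table of Subsection~1.2; the emphasis is simply reversed. The paper disposes of the converse in one line (``the table of Subsection~1.2 shows\dots'') and spends the forward direction computing $Gal(M)$ explicitly, showing $SO_5\subset Gal(M)\subset \mu_5\cdot SO_5$ so that the $\mathfrak{sp}_4$-decomposition $sym^2[0,1]=[0,2]\oplus[0,0]$ is visibly a $Gal(M)$-decomposition (this normalizer analysis is also reused in the algorithm that follows the proposition). You instead cite the table for the forward direction and unpack the converse by hand, extracting the invariant symmetric form to embed $\mathfrak{gal}(M)\hookrightarrow\mathfrak{so}_5\cong\mathfrak{sp}_4$ and then eliminating the principal $\mathfrak{sl}_2$. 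Both routes are valid; your forward direction would be strengthened by one sentence noting that the two $\mathfrak{gal}(M)$-summands, having distinct dimensions, are automatically $Gal(M)$-stable and hence correspond to differential submodules, which is precisely what the paper's normalizer computation secures.
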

\begin{proof}  (1) Suppose that $(\frak{gal}(M),V(M))=(\frak{sp}_4,[0,1])$, then $Gal(M)^o$ has Lie algebra $\frak{sp}_4$ and can only be $Sp_4$ or $SO_5=Sp_4/\{\pm 1\}$. Since $Sp_4$ has no faithful 5-dimensional irreducible representation one has $Gal(M)^o=SO_5$. Further  $SO_5\subset Gal(M)\subset (O_5\cdot \mathbb{C}^*)\cap
{\rm SL}(V(M))$. The latter group is  $\mu _5\cdot  \ {\rm SO}_5$.

Now $sym ^2V(M)$ splits as a direct sum of irreducible $Gal(M)$-modules of dimensions $1,14$.  Thus $sym^2M=L\oplus R$ with $L,R$ irreducible of dimensions $1,14$. If $Gal(M)=SO_5$, then $L={\bf 1}$. In the other case $L\neq {\bf 1}$  but  $L^{\otimes 5}={\bf 1}$. This $L$ determines a cyclic extension of $K$ of degree 5.\\
\noindent (2) Suppose that $sym^2M$ is the direct sum of irreducible modules of dimensions 1,14. The table of subsection 1.2 shows
that $(\frak{gal}(M),V(M))\cong (\frak{sp}_4,[0,1])$.
\end{proof}

{\it Algorithm}. Assume that $M$ {\it passes the test}. We want to produce a module $N$, of dimension 4 with $\det N={\bf 1}$ and $(\frak{gal}(N),V(N))=(\frak{sp}_4,[1,0])$, such that $M$ is a direct summand of $\Lambda ^2N$. \\

First we study the possibilities for $N$. One easily finds that $Gal(N)^o=Sp_4$ and
${\rm Sp}_4\subset Gal(N)\subset ({\rm Sp}_4\cdot \mathbb{C}^*)\cap {\rm SL}(V)$ and thus $Gal(N)$ is either $Sp_4$ or $Sp_4\cdot \mu _4$ (and then
$[Gal(N):Gal(N)^o]=2$).
Now $\Lambda ^2N=L\oplus R$, and $L$ generated by an
 element $F$ such that $\partial F=aF$ with $a=\frac{b'}{2b}$ for some $b\in K^*$. We try to find an isomorphism $R\rightarrow M$. The Galois group of $R$ equals $SO_5$ or
 $SO_5\cdot \mu _2$.  The latter is not contained in ${\rm SL}(V(M))$ and we conclude that  the $N$  that we want to produce has $Gal(N)=Sp_4$.\\

  If $Gal(M)\neq SO_5$,  then we have to replace $K$ by a cyclic extension $K^+$  of degree 5, in order to produce an isomorphism. The term $L$ in the proof of Proposition 5.1 has the form $L=Kb$ with $\partial b=\frac{f'}{5f}b$ for a suitable $f\in K^*$ and thus $K^+=K(\sqrt[5]{f})$ is the required extension.\\

 After replacing $K$ by $K^+$ (in case $L\neq {\bf 1}$), there is a $H\in sym ^2M$ with $H\neq 0,\ \partial H=0$. Since $M$
 is irreducible the form $H$ is non degenerate. As $K$ is a $C_1$-field, $H$ has an isotropic subspace of dimension 2. We consider now
 $M^+=M\oplus Ke$, with $\partial e=0$ and extend $H$ to a non degenerate symmetric form $H^+$ on $M^+$ with $\partial H^+=0$ and  such that $H^+$ has an isotropic subspace of dimension 3 (after possibly a quadratic extension of $K$).
 An application of Theorem 4.1 yields a module $N$ with $\det N={\bf 1}$ and  $\Lambda ^2N\cong M^+$.  From the form of $M^+$ one concludes that $N$ is the required module with $(\frak{gal}(N),V(N))= (\frak{sp}_4,[1,0])$.    \\

 We note that from an `input module' $P$ with  $\det P={\bf 1}$ and
  $(\frak{gal}(P),V(P))=(\frak{sp}_4, [2,0])$, one obtains a module $M$ with
  $\det M={\bf 1}$ and $(\frak{gal}(M),V(M)=(\frak{sp}_4,[0,1])$ by the step $sym^2[2,0]=[4,0]\oplus [0,2]\oplus [0,1]\oplus [0,0]$.

  \subsection{The cases $(\frak{sl}_5,[0,1,0,0])$ and $(\frak{so}_7,[0,0,1])$}
Both cases are `solved' by constructions of linear algebra. Indeed, one has  $sym^2[0,1,0,0]=[0,2,0,0]\oplus
[0,0,0,1]$ and the dual of $[0,0,0,1]$ is $[1,0,0,0]$. Further, $\Lambda ^2[0,0,1]=[0,1,0]\oplus [1,0,0]$.

\section{Some semi-simple  Lie algebras}

\subsection{$\frak{sl}_2\times \frak{sl}_2$}
${\rm SL}_2\times {\rm SL}_2$ is the simply connected group with
Lie algebra $\frak{sl}_2\times \frak{sl}_2$. The other connected groups with this Lie algebra are obtained by dividing ${\rm SL}_2\times {\rm SL}_2$ by a subgroup of its center $\mu _2\times \mu _2$. The category $Repr_{\frak{sl}_2\times \frak{sl}_2}$ has therefore four
full  Tannakian (proper) subcategories, namely the ones generated by one of the four modules $[1]_l\otimes [1]_r,\ [1]_l\otimes [2]_r,\ [2]_l\otimes [1]_r, \ [2]_l\otimes [2]_r$.
A generator for the category itself is
$([1]_l\otimes [0]_r)\oplus ([0]_l\otimes [1]_r)$.  We note that
$Out({\rm SL}_2\times {\rm SL}_2)=\mathbb{Z}/2\mathbb{Z}$. The non trivial element in this group is represented by $(A_1,A_2)\mapsto
(A_2,A_1)$. For an absolutely irreducible differential module
$M$ with $\det M={\bf 1}$ and $(\frak{gal}(M),V(M))$ equal to one the
above four cases (and other cases listed in Subsection 1.2), we will construct a differential module $N$ with $\det N={\bf 1}$ and $(\frak{gal}(N),V(N))=([1]_l\otimes [0]_r)\oplus ([0]_l\otimes [1]_r)$ such that (possibly after a field extension of
$K$) one has $M\in \{\{N\}\}$.

\subsubsection{$\frak{sl}_2\times \frak{sl}_2$ with $[1]_l\otimes [1]_r$}

{\it The problem}. $M$ is a given absolutely irreducible module of dimension 4 and $\det M={\bf 1}$. Test whether $M$ is, after a finite extension $K^+$ of $K$, equal to $N_1\otimes N_2$ with $\dim N_i=2$. Develop  an algorithm for computing $K^+,N_1,N_2$ in the positive case.

\begin{proposition} [Test] $\overline{M}:=\overline{K}\otimes _KM$ is isomorphic to a tensor product $N_1\otimes N_2$ of modules with dimension 2 if and only if  $sym^2M$ has a 1-dimensional factor $L$ such that  $L^{\otimes 4}={\bf 1}$.
\end{proposition}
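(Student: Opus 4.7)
The plan is to combine three ingredients: the plethysm $sym^2(N_1\otimes N_2)\cong(sym^2N_1\otimes sym^2N_2)\oplus(\Lambda^2N_1\otimes\Lambda^2N_2)$ for two-dimensional modules, the classification of four-dimensional absolutely irreducible semisimple representations in the table of Subsection 1.2, and Proposition 1.5 applied over $\overline{K}$.

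For $(\Rightarrow)$, I will take $\overline{L}:=\Lambda^2N_1\otimes\Lambda^2N_2$ as the one-dimensional summand of $sym^2\overline{M}$ produced by the plethysm, and observe that the pair $(N_1,N_2)$ is determined by $\overline{M}$ only up to the swap $N_1\leftrightarrow N_2$ and the substitution $(N_1,N_2)\mapsto(N_1\otimes\ell,N_2\otimes\ell^{-1})$, each of which leaves $\overline{L}$ invariant. Hence $Gal(\overline{K}/K)$ stabilizes $\overline{L}$, and descent yields the required one-dimensional direct summand $L$ of $sym^2M$. The canonical identification $\overline{L}^{\otimes 2}\cong(\det N_1)^{\otimes 2}\otimes(\det N_2)^{\otimes 2}\cong\det\overline{M}=\overline{\bf 1}$ is Galois-equivariant, so in fact $L^{\otimes 2}\cong{\bf 1}$ over $K$, and a fortiori $L^{\otimes 4}={\bf 1}$.

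For $(\Leftarrow)$, I will read the Lie-algebra structure straight off the table. Given such an $L$, the module $sym^2\overline{M}$ contains a $\frak{gal}(\overline{M})$-invariant line. Since $M$ is absolutely irreducible with $\det M={\bf 1}$, the pair $(\frak{gal}(\overline{M}),V(\overline{M}))$ is one of the four four-dimensional entries of the table of Subsection 1.2; inspection shows that only $(\frak{sl}_2\times\frak{sl}_2,[1]\otimes[1])$ admits an invariant line in $sym^2$. Faithfulness of the action on $V(\overline{M})$ then forces $Gal(M)^o=({\rm SL}_2\times{\rm SL}_2)/{\rm diag}(\mu_2)$. Now Proposition 1.5, applied over the algebraically closed (hence $C_1$) field $\overline{K}$, produces a differential module $\tilde{N}$ of dimension four over $\overline{K}$ with $Gal(\tilde{N})={\rm SL}_2\times{\rm SL}_2$, acting via the minimal faithful representation $([1]\boxtimes[0])\oplus([0]\boxtimes[1])$, and satisfying $\overline{M}\in\{\{\tilde{N}\}\}$. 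The splitting $V(\tilde{N})=[1]\boxtimes[0]\oplus[0]\boxtimes[1]$ decomposes $\tilde{N}=N_1\oplus N_2$ into two-dimensional summands with $Gal(N_i)={\rm SL}_2$, and the Tannakian equivalence $\{\{\tilde{N}\}\}\cong Repr_{{\rm SL}_2\times{\rm SL}_2}$ then identifies $\overline{M}$ with $N_1\otimes N_2$, as both correspond to the irreducible representation $[1]\boxtimes[1]$.

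The step requiring most care is the extraction of the factorization in $(\Leftarrow)$: one must verify that Proposition 1.5 does apply to the non-simply-connected quotient $({\rm SL}_2\times{\rm SL}_2)/{\rm diag}(\mu_2)$, and that the Tannakian inclusion $\overline{M}\in\{\{\tilde{N}\}\}$ really upgrades to the stated isomorphism via the correct representation-theoretic identification. Incidentally, the hypothesis $L^{\otimes 4}={\bf 1}$ is not actually used in $(\Leftarrow)$, and $(\Rightarrow)$ yields the stronger conclusion $L^{\otimes 2}={\bf 1}$; the weaker formulation in the statement is presumably chosen to make the criterion easy to verify in practice.
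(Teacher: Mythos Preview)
Your argument for $(\Leftarrow)$ is correct, though it differs from the paper's. The paper proceeds constructively: it uses $L^{\otimes 4}={\bf 1}$ to pass to a degree~$4$ extension where $L$ becomes trivial, obtains a horizontal non\-degenerate quadratic form $F\in sym^2M$, and then invokes the explicit Theorem~6.2 to produce $N_1,N_2$. Your abstract route via Proposition~1.1 (you wrote 1.5) and the Tannakian equivalence works too, and you are right that the hypothesis $L^{\otimes 4}={\bf 1}$ is not needed over $\overline{K}$; the paper's formulation is chosen because it bounds the degree of the extension in the algorithm.

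Your argument for $(\Rightarrow)$ contains a genuine error: the conclusion $L^{\otimes 2}\cong{\bf 1}$ is \emph{false} in general. The paper computes the normalizer $G^+$ of $G^o:={\rm SL}(V_1)\otimes{\rm SL}(V_2)$ inside ${\rm SL}(V_1\otimes V_2)$ and finds $[G^+:G^o]=4$, generated by an element $\tau$ that swaps the two tensor factors (and includes a scalar $e^{2\pi i/8}$ to keep determinant~$1$). One then checks $\tau h=i\cdot h$, where $h$ spans the one-dimensional summand of $sym^2(V_1\otimes V_2)$. Hence when $Gal(M)=G^+$ (which does occur by inverse Galois theory, cf.\ Subsection~1.1(4)), the differential Galois group of $L$ is $\mu_4$, so $L^{\otimes 2}\neq{\bf 1}$ while $L^{\otimes 4}={\bf 1}$.

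The flaw in your reasoning is the assertion that the identification $\overline{L}^{\otimes 2}\cong\det\overline{M}$ is Galois-equivariant. It is not: the natural isomorphism $(\det N_1)^{\otimes 2}\otimes(\det N_2)^{\otimes 2}\to\det(N_1\otimes N_2)$ is \emph{not} invariant under the swap $N_1\leftrightarrow N_2$, because the swap map $V_1\otimes V_2\to V_2\otimes V_1$ has determinant $-1$ in dimension~$4$. Said differently, if $F$ is the horizontal generator of $\overline{L}$ and $\omega_0$ the $K$-rational horizontal generator of $\det M={\bf 1}$, then the swap sends $F\mapsto iF$ but fixes $\omega_0$; so any horizontal map $F^{\otimes 2}\mapsto c\,\omega_0$ is sent to $-1$ times itself and cannot descend. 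What your descent argument \emph{does} establish is that $\overline{L}$ is trivial, hence $L$ has finite order; to bound that order by~$4$ you still need the normalizer computation the paper carries out.
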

\begin{proof} (1)  Suppose that $\overline{M}\cong N_1\otimes N_2$.  Without loss of generality we may assume that $\det M=\det N_1=\det N_2={\bf 1}$. Let $V$ be the solution space of $M$ and $V_1,V_2$ those of $N_1,N_2$. Then $V=V_1\otimes V_2$
and the differential Galois group $Gal(\overline{M})=Gal(M)^o$ of $N_1\otimes N_2$ is
an algebraic subgroup of ${\rm SL}(V_1)\otimes {\rm SL}(V_2)$. It is not a proper subgroup since $\overline{M}$ is irreducible.

The geometric interpretation, in the spirit of [F], of the tensor product $V=V_1\otimes V_2$ is the embedding $\mathbb{P}(V_1)\times \mathbb{P}(V_2)\rightarrow \mathbb{P}(V)$. The group $G^o:=Gal(M)^o={\rm SL}(V_1)\otimes {\rm SL}(V_2)$ preserves this
embedding. From this one deduces that  the normalizer $G^+ $ of $G^o $  in
${\rm SL}(V_1\otimes V_2)$ is generated by $G^o$ and an element $\tau$ which can be described as follows. Let $\sigma :V_1\rightarrow V_2$ be a linear bijection. Then
$\tau (v_1\otimes v_2)=e^{2\pi i/8}\cdot (\sigma ^{-1}v_2)\otimes (\sigma v_1)$. It is
 easy to verify that $\tau \in G^+$.  Obviously $\tau$ permutes the two factors of the tensor product.
 Any $g\in G^+$ preserves the set of pure
 tensors $\{v_1\otimes v_2|\ v_1\in V_1,\ v_2\in V_2\}$. Then $g$ can also permute
 the two factors of the tensor product or preserve them. After multiplication by $\tau$,
 if needed, we may suppose that $g$ has the form $A_1\otimes A_2$ with
 $A_i\in {\rm GL}(V_i)$ and we may write $g=\lambda \cdot (B_1\otimes B_2)$
 with $\lambda \in \mathbb{C}^*$ and $B_i\in {\rm SL}(V_i)$ for $i=1,2$. Then
 $\lambda ^4=1$ and since $\tau ^2=i$, one has that $\lambda =\tau ^{2j}$ for some
 $j$. This proves the statement concerning $G^+$. Further $[G^+:G^o]=4$ since
 $\tau ^4=-1\in G^o$.  The group $Gal(M)$ lies  between $G^o$ and $G^+$ and there
 are the following possibilities: $G=G^+=<G^o,\tau >,\ G=<G^o,\tau ^2>,\ G=G^o$.

Take a basis $e_1,e_2$ of $V_1$ and put $f_1=\sigma e_1,\ f_2=\sigma e_2$. The element $ h:=(e_1\otimes f_1)\otimes (e_2\otimes f_2)-(e_1\otimes f_2)\otimes (e_2\otimes f_2)$ in $sym^2(V_1\otimes V_2)$ is invariant under $G$. Moreover, $\mathbb{C}h$ is the unique line in $sym^2(V_1\otimes V_2)$, invariant under $G$. Further
$\tau h=i\cdot h$. There corresponds a unique 1-dimensional submodule $L\subset
sym^2M$. Let $d\geq 1$ be minimal with $L^{\otimes d}={\bf 1}$. The three possibilities
for $G$ correspond to $d=4,2,1$.

\noindent (2) Let the 1-dimensional $L$ be given. Then $L=Ke$ with $\partial e
=\frac{g'}{4g}e$ for some $g\in K^*$. After replacing $K$ by $K(\sqrt[4]{g})$, one has
that $L=KF$ with $\partial F=0$. The symmetric quadratic form $F$ is non degenerate
since $M$ is (absolutely) irreducible. After possibly a quadratic extension of $K$, the
form $F$ has an isotropic subspace of dimension 2. Now we apply Theorem 6.2.
\end{proof}

\begin{theorem} Let $M$ be a differential module over $K$ of dimension 4 with $\det M={\bf 1}$ (no further conditions on
$M$ and $K$).
Then $M$ is isomorphic to $A\otimes B$ for modules $A,B$ of dimension 2 and with $\det A=\det B={\bf 1}$ if and only if there exists
$F\in sym^2M$, $\partial F=0$, $F$ is non degenerate and has an isotropic subspace of dimension 2.
\end{theorem}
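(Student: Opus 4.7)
For the forward direction, I would assume $M\cong A\otimes B$ with $\det A=\det B={\bf 1}$, choose bases $a_1,a_2$ of $A$ and $b_1,b_2$ of $B$ so that the matrices $(\beta_{k,i})$, $(\gamma_{l,j})$ of $\partial$ are trace zero, and set $m_{ij}:=a_i\otimes b_j$. A direct computation, exactly in the style of Theorems 2.1 and 4.1, then shows that $F:=m_{11}m_{22}-m_{12}m_{21}\in sym^2 M$ satisfies $\partial F=0$, is non-degenerate, and admits the 2-dimensional isotropic subspace $Ka_1\otimes B=Km_{11}+Km_{12}$.

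For the converse, the plan is first to use the hypothesis to normalize $F$. A 2-dimensional isotropic subspace in a rank $4$ non-degenerate symmetric form is automatically maximal (it equals its own perp), so it has a complementary maximal isotropic subspace, and one obtains a basis $m_{11},m_{12},m_{21},m_{22}$ of $M$ in which $F=m_{11}m_{22}-m_{12}m_{21}$. I would then write $\partial m_{ij}=\sum_{kl}\alpha_{kl,ij}m_{kl}$ and expand $\partial F=0$ in the standard basis of $sym^2 M$: this produces ten linear equations on the $\alpha_{kl,ij}$, namely the four vanishings $\alpha_{11,22}=\alpha_{22,11}=\alpha_{12,21}=\alpha_{21,12}=0$; the four equalities $\alpha_{11,21}=\alpha_{12,22}$, $\alpha_{11,12}=\alpha_{21,22}$, $\alpha_{22,21}=\alpha_{12,11}$, $\alpha_{22,12}=\alpha_{21,11}$; and the two diagonal relations $\alpha_{11,11}+\alpha_{22,22}=0$ and $\alpha_{12,12}+\alpha_{21,21}=0$.

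From these equations I would extract two $2\times 2$ matrices $\beta=(\beta_{k,i})$ and $\gamma=(\gamma_{l,j})$ satisfying $\alpha_{kl,ij}=\beta_{k,i}\delta_{l,j}+\gamma_{l,j}\delta_{k,i}$. The four equalities name the off-diagonal entries $\beta_{1,2}:=\alpha_{11,21}=\alpha_{12,22}$, $\beta_{2,1}:=\alpha_{21,11}=\alpha_{22,12}$, $\gamma_{1,2}:=\alpha_{11,12}=\alpha_{21,22}$, $\gamma_{2,1}:=\alpha_{12,11}=\alpha_{22,21}$, and the four vanishings are exactly the ansatz predictions when neither $k=i$ nor $l=j$. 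The diagonal entries $\beta_{i,i},\gamma_{j,j}$ are determined by the four $\alpha_{ii,ii}$ only up to the common shift $\beta\mapsto\beta+cI$, $\gamma\mapsto\gamma-cI$, which is the expected ambiguity $A\otimes B\cong(A\otimes L)\otimes(B\otimes L^{-1})$ for a 1-dimensional module $L$; the two diagonal relations express $\det M={\bf 1}$ as ${\rm tr}(\beta)+{\rm tr}(\gamma)=0$, and I would pick the unique $c\in K$ making ${\rm tr}(\beta)={\rm tr}(\gamma)=0$. Defining $A$ with basis $a_1,a_2$ and $\partial a_i=\sum_k\beta_{k,i}a_k$, and similarly $B$ from $\gamma$, gives modules with $\det A=\det B={\bf 1}$, and $a_i\otimes b_j\mapsto m_{ij}$ is the required isomorphism. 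The main obstacle is the combinatorial expansion of $\partial F$ in $sym^2 M$ and the verification that the resulting system really does force the tensor-product ansatz; each individual step is mechanical, but the enumeration is noticeably messier than for the symmetric and exterior square analogues in Sections 2 and 4.
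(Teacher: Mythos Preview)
Your proposal is correct and follows essentially the same route as the paper: both directions use the normal form $F=m_{11}m_{22}-m_{12}m_{21}$, expand $\partial F=0$ to obtain exactly the ten linear relations you list among the coefficients $\alpha_{kl,ij}$ (the paper calls them $\gamma_{kl,ij}$), and then solve the over-determined system for the two trace-zero $2\times 2$ matrices defining $A$ and $B$. The only cosmetic difference is that the paper imposes ${\rm tr}=0$ on both matrices from the outset and simply writes down the resulting unique solution, whereas you first note the one-parameter shift ambiguity $\beta\mapsto\beta+cI$, $\gamma\mapsto\gamma-cI$ and then fix $c$; the content is identical.
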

\begin{proof} Suppose that $M=A\otimes B$, with $\dim A=\dim B=2$ and
$\det A=\det B={\bf 1}$. There is a canonical isomorphism
$sym^2M\rightarrow ((sym ^2A)\otimes (sym^2B))\oplus ((\Lambda ^2A)\otimes (\Lambda ^2B))$. The second factor is, by assumption ${\bf 1}$
and is therefore generated by an element $F$ with $\partial F=0$. More explicitly, chose bases $a_1,a_2$ for $A$ and $b_1,b_2$ for $B$ such that the matrices for
$\partial$ on these bases have trace 0. Put $m_{ij}=a_i\otimes b_j$. Then the element
$F:=m_{11}\otimes m_{22}-m_{12}\otimes m_{21} \in sym ^2M$ satisfies $\partial F=0$. Further, the symmetric form $F$ is non degenerate and has an isotropic subspace of dimension 2.\\

Suppose now that $F\in sym ^2M$ with the required properties exists. Then for a suitable basis
$\{m_{ij} |\ 1\leq i,j\leq 2\}$ one has $F=m_{11}\otimes m_{22}-m_{12}\otimes m_{21}$. We consider now $K$-vector spaces
$A$ and $B$ with bases $a_1,a_2$ and $b_1,b_2$. Define a $K$-isomorphism $\phi :A\otimes B\rightarrow M$
by sending $a_i\otimes b_j$ to $m_{ij}$ for all $1\leq i,j\leq 2$. We make $A$ and $B$ into differential modules by putting
$\partial a_i=\sum \alpha _{j,i}a_j$ and $\partial b_i=\sum \beta _{j,i}b_j$. The two matrices $(\alpha _{i,j}),\ (\beta _{i,j})$
are as yet unknown. We only require that their traces are 0.

Let $\partial$ on $M$ be given by $\partial m_{ij}=\sum \gamma _{kl,ij}m_{kl}$. The assumption $\partial F=0$ leads
to the following equalities
\[\gamma _{11,22}=\gamma _{22,11}=\gamma _{12,21}=\gamma _{21,12}=0,\ \gamma _{11,11}+\gamma _{22,22}=\gamma _{12,12}+\gamma _{21,21}=0\ , \]
\[ \gamma _{12,11}=\gamma _{22,21},\  \gamma _{21,11}=\gamma _{22,12},\  \gamma _{12,22}=\gamma _{11,21},\
\gamma _{21,22}=\gamma _{11,12}\ .\]

The assumption that $\phi$ is an isomorphism of differential modules leads to a unique solution for the matrices
$(\alpha _{i,j}),\ (\beta _{i,j})$, namely
\[\alpha _{1,2}=\gamma _{11,21}=\gamma _{12,22},\  \alpha _{2,1}=\gamma _{22,12}=\gamma _{21,11}\ ,\]
\[ \alpha _{11}= (\gamma _{11,22}+\gamma _{12,12})/2,\ \alpha _{22}=(\gamma _{21,21}+\gamma _{22,22})/2\ , \]
 \[\beta _{1,2}=\gamma _{11,12}=\gamma _{21,22},\  \beta _{2,1}=\gamma _{22,21}=\gamma _{12,11}\ ,\]
 \[ \beta _{11}=(\gamma _{11,22}+\gamma _{21,21})/2,\ \beta _{22}=(\gamma _{12,12}+\gamma _{22,22})/2 \ .\]
 \end{proof}

\begin{remarks} {\rm $\ $\\
(1) Assume that $M$ satisfies $\det M={\bf 1}$ and $M=A\otimes B$ with
$\dim A=\dim B=2$ and {\it no condition on $\det A,\ \det B$}.

Write $A=L_1\otimes A_1,\ B=L_2\otimes B_1$ where
$L_1,L_2$ are modules of dimension 1 and $\det A_1=\det B_1={\bf 1}$. Put
$L=L_1\otimes L_2$, then $L^{\otimes 4}=\det M={\bf 1}$ and $sym ^2M=
L^{\otimes 2}\otimes sym^2(A_1\otimes B_1)$. The term $sym ^2(A_1\otimes B_1)$
contains $F_1$ as constructed  in the first part of the proof of Theorem 6.2. The corresponding factor $L^{\otimes 2}\otimes KF_1$ in $sym ^2M$ is a trivial module
if $L^{\otimes 2}={\bf 1}$. Otherwise, we have to determine in $sym ^2M$ a 1-dimensional submodule $R$ with $R^{\otimes 2}={\bf 1}$. Then one can either
replace $K$ by the quadratic extension defined by $R$ or replace $M$ by
$L\otimes M$, where $L$ is any 1-dimensional module with $L^{\otimes 2}=R$,
in order to be able to apply Theorem 6.2.\\
(2) In [F], p.496, one considers a differential {\it operator} $M$ of order 4 and assumes that  $M$ is not solvable by (possibly inhomogeneous) differential equations of order one and algebraic extensions. Moreover, it is assumed  that a basis of  solutions of $M$ satisfies a non trivial homogeneous equation over $\mathbb{C}$. The assertion is that there exists a differential operator  $\tilde{M}$, equivalent  to $M$, such that $\tilde{M}=M_1\otimes M_2$ with  $M_i$ of order 2. Moreover, for $\tilde{M}$ and also for the $M_i$ quadratic extensions of $K$ might be needed (see [vdP-S] for the notion of tensor product of operators and equivalence of operators).\\
(3) In [vH] an experimental algorithm for testing and solving  $L=L_1\otimes L_2$ (with $L$ of order 4 and $L_i$ of order 2) is given. Again a quadratic extension of $K$ might be needed. The possibilities of reducing order 4 differential operators to operators of smaller order are also studied  in [P]. } \hfill $\square$ \end{remarks}

\subsubsection{$\frak{sl}_2\times \frak{sl}_2$ with $[1]_l\otimes [2]_r$ (or $[2]_l\otimes [1]_r$)}
 Let $M$ be an absolutely irreducible differential module with $\det M={\bf 1}$ and
 $(\frak{gal}(M),V(M))=(\frak{sl}_2\times \frak{sl}_2,[1]_l\otimes [2]_r)$. The method of the later Subsection 6.2, combined with Section 2, yields the required decomposition $M=N_2\otimes N_3$.

\subsubsection{ $\frak{sl}_2\times \frak{sl}_2$ with $[2]_l\otimes [2]_r$}
 Let $M$ be an absolutely irreducible differential module with $\det M={\bf 1}$ and
 $(\frak{gal}(M),V(M))=(\frak{sl}_2\times \frak{sl}_2,[2]_l\otimes [2]_r)$.
 Write $V=V(M)=sym ^2V_1\otimes sym^2V_2$, where $V_1,V_2$ are the
 2-dimensional standard representations of the two components of $\frak{sl}_2\times \frak{sl}_2$. The group $G^o=Gal(M)^o$ equals ${\rm PSL}(V_1)\otimes {\rm PSL}(V_2)={\rm PSL}(V_1)\times {\rm PSL}(V_2)$. Let $\sigma :V_1\rightarrow V_2$ be a linear
 isomorphism. Define $\tau \in {\rm SL}(V)$ by
 $\tau :a\otimes b\mapsto e^{2\pi i/18}\cdot (sym^2(\sigma ^{-1})b)\otimes (sym^2(\sigma)a)$. The normalizer $G^+$ of $G^o$ in ${\rm SL}(V)$ is seen to be
 $<G^o,\tau >$ and $[G^+:G^o]=18$. Further $G^o\subset Gal(M)\subset G^+$.

 $\Lambda ^2M$ induces the $\frak{sl}_2\times \frak{sl}_2$-module $([2]_l\otimes [0]_r)\oplus ([0]_l\otimes [2]_r)\oplus ([2]_l\otimes [4]_r)\oplus ([4]_l\otimes [2]_r)$ with terms of dimensions $3,3,15,15$. If
 $\tau \in Gal(M)$, then $\Lambda ^2M$ has two irreducible components, dimensions $6,30$.
 Otherwise $\Lambda ^2M$ has four irreducible components. In both cases $\Lambda ^2M$
 contains a unique summand $A$ with solution space
 $([2]_l\otimes [0]_r)\oplus ([0]_l\otimes [2]_r)$ on which
 $Gal(M)$ acts faithfully. In particular, $M$ can be obtained from $A$ by constructions of linear algebra.
 Let $P_1,P_2$ denote the projections of  $([2]_l\otimes [0]_r)\oplus ([0]_l\otimes [2]_r)$ onto its components.
 One calculates that $\tau P_1=e^{2\pi i/9}P_2,\ \tau P_2=e^{2\pi i/9}P_1$ and thus
 $\tau (P_1-P_2)=-e^{2\pi i/9}(P_1-P_2)$.  Thus ${\rm End}(A)$ contains precisely
 two 1-dimensional submodules, a trivial one corresponding to $\mathbb{C}(P_1+P_2)$ and
 $L$ corresponding to $\mathbb{C}(P_1-P_2)$. Let $d>0$ be minimal with $L^d={\bf 1}$, then
 $d=[Gal(M):G^o]$ is a divisor of $18$. Write $L=Ke$ with $\partial e=\frac{g'}{dg}e$ for some $g\in
 K^*$ and define $K^+=K(\sqrt[d]{g})$. Then $K^+\otimes A$ has differential Galois group
 $G^o$ and decomposes as a direct sum of two differential modules of dimension 3 with
 differential Galois group ${\rm PSL}_2$. An application of Section 2 finishes this case.

 An {\it alternative method} is the following. The solution space of $M$ can be identified with $\frak{g}=\frak{sl}_2\times \frak{sl}_2$. The cyclic group $G^+/G^o$ acts faithfully on the space $\mathbb{C}[\ ,\ ]$,
 where $[\ ,\ ]:\Lambda ^2\frak{g}\rightarrow \frak{g}$ is the Lie operation. This induces a unique 1-dimensional submodule $L$ of ${\rm Hom}(\Lambda ^2M,M)$. Let $d>0$ be minimal with $L^{\otimes d}={\bf 1}$. Then $d=[Gal(M):Gal(M)^o]$ is a divisor of $18$ and $L$ defines a cyclic extension
 $K^+\supset K$ of dimension $d$. The module $K^+\otimes M$ is an adjoint module and an application
  of Theorem 1.3 finishes the algorithm.

 \subsubsection{$\frak{sl}_2\times \frak{sl}_2$ with $[1]_l\otimes [3]_r$ and $[1]_l\otimes [4]_r$}
 Let the absolutely irreducible differential module $M$ satisfy $\det M={\bf 1}$ and
 $(\frak{gal}(M),V(M))=(\frak{sl}_2\times \frak{sl}_2,[1]_l\otimes [3]_r)$. We want to reduce this module to the
 case $[1]_l\otimes [1]_r$.  One writes $V(M)=
 V_1\otimes sym^3(V_2)$ where $V_1,V_2$ are the standard representations of dimension 2
 of the two factors $\frak{sl}_2$. Then $Gal(M)^o={\rm SL}(V_1)\otimes sym ^3{\rm SL}(V_2)$.  The group $Gal(M)$ is contained in the normalizer of $Gal(M)^o$ in ${\rm SL}(V(M))$, which is $\mu _8\cdot Gal(M)^o$. In particular, every decomposition as $\frak{gal}(M)$-modules is also a decomposition as
 $Gal(M)$-module. Now $\Lambda ^2([1]_l\otimes [3]_r)$ contains the irreducible summand $[0]_l\otimes [2]_r$.
 Further $([0]_l\otimes [2]_r)\otimes ([1]_l\otimes [3]_r)$ contains the direct summand $[1]_l\otimes [1]_r$.

 We want to reduce the case $[1]_l\otimes [4]_r$ to $[1]_l\otimes [2]_r$. One has
 ${\rm SL}_2\otimes {\rm PSL}_2=Gal(M)^o\subset Gal(M)\subset \mu _{10}\cdot Gal(M)^o$ and every decomposition as $\frak{sl}_2\times \frak{sl}_2$-modules is also a decomposition as $Gal(M)$-module.
Now $sym^2([1]_l\otimes [4]_r)$ contains $[0]_l\otimes [2]_r$ and $([1]_l\otimes [4]_r)\otimes ([0]_l\otimes [2]_r)$
contains $[1]_l\otimes [2]_r$.

 \subsection{$\frak{sl}_2\times \frak{sl}_3$ with $[1]\otimes [1,0]$ and $[2]\otimes [1,0]$}
Let $M$ be an absolutely irreducible differential module such that $\det M={\bf 1}$ and $(\frak{gal}(M),V(M))=
(\frak{sl}_2\times \frak{sl}_3,[1]\otimes [1,0])$. The {\it problem} is to decompose $M$ as
$N_2\otimes N_3$ with $\dim N_i=i$, $\det N_i={\bf 1}$ and $(\frak{gal}(N_2),V(N_2))=
(\frak{sl}_2,[1])$ and $(\frak{gal}(N_3),V(N_3))=(\frak{sl}_3,[1,0])$.

The construction follows from the observation that $[1]\otimes [1,0]$ `generates' the Tannakian category $Repr_{\frak{sl}_2\times \frak{sl}_3}$. Explicitly,
$([1]\otimes [1,0])\otimes ([1]\otimes [1,0])$ is the direct sum of
$[0]\otimes [2,0],\ [0]\otimes [0,1],\    [2]\otimes [2,0],\  [2]\otimes [0,1]$ of dimensions
$6,3,18,9$. The corresponding direct sum decomposition of $\overline{M}\otimes \overline{M}$ is already present over $K$, since the Galois group $Gal(\overline{K}/K)$ cannot permute subspaces of distinct dimensions. Choose $N_3$ to be the dual of the factor of $M\otimes M$ of dimension 3.

Next, we consider $([1]\otimes [1,0])\otimes ([0]\otimes [0,1]) $ which is the direct sum of \\
$[1]\otimes [0,0],\ [1]\otimes [1,1]$ of dimensions $2,16$. As before, $M\otimes N_3^*$, has a direct summand of dimension 2 that we will call $N_2$. Then, by construction
 $\overline{M}$ and $\overline{N}_2\otimes \overline{N}_3$ are isomorphic. The solution space
 $V(M)$ has a decomposition as tensor product $V_2\otimes V_3$ of spaces of dimensions
 $2$ and $3$.  This is the unique decomposition, invariant under $Gal(M)^o$. Therefore this
 decomposition is also invariant under $Gal(M)$. Hence every element of $Gal(M)$ has the form
 $A_2\otimes A_3$ with $A_i\in {\rm GL}(V_i)$.  Combining this with the assumption that
 $Gal(M)\subset {\rm SL}(V(M))$ yields $Gal(M)=Gal(M)^o$. Thus the isomorphism between
 $\overline{M}$ and $\overline{N}_2\otimes \overline{N}_3$ is in fact an isomorphism between
 $M$ and $N_2\otimes  N_3$.\\

 For the case $[2]\otimes [1,0]$ one writes the solution space of a corresponding differential module $M$
 as $V(M)=(sym ^2V_1)\otimes V_2,\ \dim V_1=2,\ \dim V_2=3$. One verifies that $Gal(M)^o=(sym^2{\rm SL}(V_1))\otimes {\rm SL}(V_2)$ and that $Gal(M)$ is contained in $\mu _9\cdot Gal(M)^o$. The homomorphism $Gal(M)/Gal(M)^o\rightarrow Out(Gal(M)^o)$ is trivial since the latter group has order two.
Thus any direct sum decomposition of $\frak{gal}(M)$-modules is also a decomposition of $Gal(M)$-modules and of differential modules.  Now one makes the following  steps: the dual of $[2]\otimes [1,0]$
is $[2]\otimes [0,1]$; $([2]\otimes [1,0])\otimes ([2]\otimes [0,1])$ contains $[2]\otimes [0,0]$;
$([2]\otimes [0,0])\otimes ([2]\otimes [1,0])$ contains $[0]\otimes [1,0]$.

\subsection{$\frak{sl}_2\times \frak{sl}_4$ with $[1]\otimes [1,0,0]$ and similar cases}
Let the absolutely irreducible differential module $M$, with $\det M={\bf 1}$, induce the above case. Write
$V(M)=V_1\otimes V_2$ with $\dim V_1=2,\ \dim V_2=4$. The group $G^o=Gal(M)^o={\rm SL}(V_1)\otimes {\rm SL}(V_2)$ has the normalizer $G^+=\mu _8\cdot G^o$ and $[G^+:G^o]=2$.
The module $M\otimes M^*$ has a 3-dimensional summand $A_3$ corresponding to $[2]\otimes [0,0,0]$ and with differential Galois
group ${\rm PSL}(V_1)$. Using Section 2, one computes a module
$A_2$ with $\det A_2={\bf 1}$ and $sym^2A_2=A_3$.

Then $A_2\otimes M$ corresponds to $([0]\otimes [1,0,0])\oplus ([2]\otimes [1,0,0])$. The first term produces a direct summand $B_4$.
Then $M$ is isomorphic to $A_2\otimes B_4$ up to multiplication by
a 1-dimensional module $L$ with $L^{\otimes 2}={\bf 1}$.

The following cases can be `solved' in a similar way:
$\frak{sl}_2\times \frak{sp}_4$ with $[1]\otimes [1,0]$ and $[1]\otimes [0,1]$,
$\frak{sl}_2\times \frak{sl}_5$ with $[1]\otimes [1,0,0,0]$.

\subsection{$\frak{sl}_3\times \frak{sl_3}$ with $[1,0]_l\otimes [1,0]_r$}
This is a rather complicated case. Let $M$ be a differential module
with $\det M={\bf 1}$ and $(\frak{gal}(M),V(M))=(\frak{sl}_3\times \frak{sl_3},[1,0]_l\otimes [1,0]_r)$. The solution space
$V=V(M)$ has a decomposition $V_1\otimes V_2$ with $\dim V_1=\dim V_2=3$.
The group $G^o=Gal(M)^o={\rm SL}(V_1)\otimes {\rm SL}(V_2)$ acts in the obvious way.
The normalizer $G^+$ of $G^o$ in ${\rm SL}(V)$ can be seen to be $<G^o,\tau >$ where
$\tau$ is defined as follows. Choose a linear isomorphism $\sigma :V_1\rightarrow V_2$,
then $\tau :v_1\otimes v_2\mapsto e^{2\pi i/18}\cdot (\sigma ^{-1}v_2)\otimes (\sigma v_1)$.
Then $\tau ^2$ is multiplication by $e^{2\pi i/9}$ and $\tau ^6\in G^o$. Thus
$[G^+:G^o]=6$ and $G^o\subset Gal(M)\subset G^+$.

Consider $M\otimes M^*$. The tensor product $([1,0]_l\otimes [1,0]_r)\otimes ([0,1]_l\otimes [0,1]_r)$
has the irreducible components: $[1,1]_l\otimes [0,0]_r,\ [0,0]_l\otimes [1,1]_r,\ [0,0]_l\otimes [0,0]_r,\
[1,1]_l\otimes [1,1]_r$. If $\tau \not \in Gal(M)$, then all components belong to direct summands
of $M\otimes M^*$ and thus we find differential modules $M_1,M_2$ corresponding to the
first two terms.

If $\tau \in Gal(M)$, then $\tau$ interchanges the first two terms and $\tau ^2$ is the identity.
Thus $M\otimes M^*$ has an irreducible summand $A$ of dimension 16, which decomposes as a direct sum of two components after a quadratic extension of $K$.
This quadratic extension is found as described in Subsection 1.1 part (2).\\

 After, if needed, replacing $K$ by a quadratic extension, we may suppose that we
know the modules $M_1,M_2$ corresponding to $[1,1]\otimes [0,0]$ and
$[0,0]\otimes [1,1]$. The method of Section 3 produces, after possibly quadratic extensions of $K$, differential modules $N_1,N_2$ of dimension 3 and $\det N_i={\bf 1}$, such that  $M_i$ is the kernel of the obvious morphism $N_i\otimes N_i^*\rightarrow {\bf 1}$ for $i=1,2$.

 We note that $N_i$ is not unique, one may replace it by its dual  $N_i^{-1}$ and/or multiply it by  a 1-dimensional differential module $L$ with $L^{\otimes 3}={\bf 1}$.
 Thus, up to such a 1-dimensional differential module, there are four candidates for the tensor decomposition $M$,  namely  $N_1^{ \pm 1}\otimes N_2^{\pm 1}$. For a candidate $C$ one computes  whether the differential module ${\rm Hom}(C,M)$ has a 1-dimensional summand $L$  such that $L^{\otimes 3}={\bf 1}$. If such $L$ exists then $M\cong C\otimes L$. This solves the problem.  \\

 We observe that the finite extension of $K$, needed (together with two differential modules of dimension 3) to `solve' $M$ can be found by operations with
 $\Lambda ^2M$. The group $Gal(M)$ acts faithfully on the corresponding solution space
 $([2,0]_l\otimes [0,1]_r)\oplus ([0,1]_l\otimes [2,0]_r)$.
 The action of $\tau$ on $P_1,P_2$, the projections onto the two factors, can be seen to be
 $\tau P_1= e^{2\pi i/9}P_2,\ \tau P_2= e^{2\pi i/9}P_1$. Thus $\tau (P_1-P_2)=-e^{2\pi i/9}(P_1-P_2)$
 and there  corresponds a 1-dimensional submodule $L$ of ${\rm End}(\Lambda ^2M)$. Let $d>0$ be minimal with $L^{\otimes d}={\bf 1}$. Then $d$ divides $18$ and $L$ defines an explicit cyclic extension $K^+$ of $K$.

\subsection{$\frak{sl}_2\times \frak{sl}_2\times \frak{sl}_2$ with $[1]_l\otimes [1]_m\otimes [1]_r$}
Here $l,m,r$ stand for left, middle, right. This is again a rather complicated case. Consider a differential module $M$ with
$\det M={\bf 1}$ which induces the above representation of the Lie algebra of $Gal(M)$ on $V(M)$. One writes $V(M)=V_1\otimes V_2\otimes V_3$ and chooses identifications $a_i:\mathbb{C}^2\rightarrow V_i$
 for $i=1,2,3$. Then $Gal(M)^o=G^o={\rm SL}(V_1)\otimes {\rm SL}(V_2)\otimes {\rm SL}(V_3)$
 and the normalizer $G^+$ of $G^o$ in ${\rm SL}(V(M))$ has the form $(\mu _8\cdot G^o)\ltimes
 S_3$. We note that $[\mu _8\cdot G^o:G^o]=4$ and that the action of $S_3$ on $V(M)$ is
 given by: the permutation $\pi$ maps $v_1\otimes v_2\otimes v_3$ to
 $a_1a_{\pi (1)}^{-1}v_{\pi (1)}\otimes a_2a_{\pi (2)}^{-1}v_{\pi (2)}\otimes a_3a_{\pi (3)}^{-1} v_{\pi (3)}$.
One has $G^o\subset Gal(M)\subset G^+$. For convenience we write $[a;b;c]$ for the
$\frak{sl}_2\times \frak{sl}_2\times \frak{sl}_2$-module  with $[a]_l\otimes [b]_m\otimes [c]_r$.

The module $V(M)\otimes V(M)^*$ decomposes into the irreducible factors:\\
$[0;0;0],[2;0;0],[0;2;0],[0;0;2],[0;2;2],[2;0;2],[2;2;0],[2;2;2]$. The group $\mu _8 G^o$ acts here via its
quotient ${\rm PSL}(V_1)\otimes {\rm PSL}(V_2)\otimes {\rm PSL}(V_3)$ and $S_3$ permutes the
summands in the obvious way. We consider now the most complicated
case $Gal(M)=G^+$ and leave the other cases to the imagination. Then $M\otimes M^*$ has irreducible direct summands of dimensions $1,9,27,27$. The summand $A$ of dimension 9 has module
$[2;0;0]\oplus [0;2;0]\oplus [0;0;2]$. Then $\overline{A}=B_1\oplus B_2\oplus B_3$ where $B_1, B_2,B_3$ correspond to $[2;0;0],[0;2;0],[0;0;2]$.

Thus $\ker (\partial ,{\rm End}(\overline{A}))=\mathbb{C}P_1+
\mathbb{C}P_2+\mathbb{C}P_3$, where $P_i$ is the projection onto $B_i$ for $i=1,2,3$. It follows that
the differential module ${\rm End}(A)$ has a  summand corresponding to
$\mathbb{C}(P_1+P_2+P_3)$ and a 2-dimensional summand $T$ corresponding to the $S_3$-invariant subspace
$\{\lambda _1P_1+\lambda _2P_2+\lambda _3P_3|\ \lambda _i\in \mathbb{C},\ \sum _i \lambda _i=0\}$.
By construction, the differential Galois group of $T$ is the group $S_3$ and the Picard-Vessiot field
$K^+$ of $T$ is the field extension of $K$ needed for the decomposition of $M$ as a tensor product
with three factors. The Kovacic algorithm (slightly changed because $S_3\not \subset {\rm SL}_2$)
computes $K^+$. The terms $A_i$ of the decomposition $K^+\otimes _KA=A_1\oplus A_2\oplus A_3$
have differential Galois groups ${\rm PSL}(V_i)$. Using Section 2, one computes a module $M_i$
with $\det M_i={\bf 1}$ and $sym^2M_i\cong A_i$. Then $K^+\otimes _K M$ is isomorphic to
$(M_1\otimes M_2\otimes M_3)\otimes L^{-1}$ where $L$ is a suitable 1-dimensional module with
$L^{\otimes 4}={\bf 1}$. The term $L$ is the unique summand of ${\rm Hom}(K^+\otimes _K M,M_1\otimes M_2\otimes M_3)$ of dimension 1.\\

\noindent {\it Acknowledgments}. We thank Willem de Graaf and  Mark van Hoeij for their useful comments concerning this paper.\\

{\bf References}\\
\noindent [C-W] \'E.~Compoint and J.A.~Weil - {\it Absolute reducibility of differential operators and Galois groups} - Journal of Algebra 275 (2004) 77-105.\\
\noindent [F] G.~Fano - {\it \"Uber Lineare homogene Differentialgleichungen mit algebraischen Relationen zwischen den Fundamentall\"osungen } - Math.Ann.53 (1900), 493-590. \\
\noindent [vH] M.~van Hoeij -{\it Decomposing a 4'th order linear differential equation as a symmetric product} - Banach Center Publications, vol 58, p. 89-96, Institute of Mathematics, Polish Academy of Sciences, Warszawa 2002.\\
\noindent [vH-vdP] M.~van Hoeij and M.~van der Put - {\it  Descent for differential modules and skew fields} - J. Algebra 296 (2006), no 1, 18-55. \\
\noindent [H] J.E.~Humphreys - {\it Linear Algebraic Groups} - GTM 21, Springer-Verlag, 1987.\\
\noindent [J] N.~Jacobson - {\it Lie Algebras} - Dover Publications, Inc. New York, 1962.\\
\noindent [K] N.M~Katz - {\it Exponential sums and differential equations} - Annals of Mathematical studies, Number 124,  Princeton University Press, 1990.\\
\noindent [LiE] LiE online service - www-math.univ-poitiers.fr/~maavl/LiE/form.html \\
\noindent [N] K.A.~Nguyen - {\it On $d$-solvability for linear differential equations} -  preprint
November 2006.\\
\noindent [P] A.C.~Person - {\it Solving Homogeneous Linear Differential Equations of Order 4 in Terms of Equations of Smaller Order} - PhD thesis 2002, NCSU, \\
www.lib.ncsu.edu/theses/available/etd-08062002-104315\\
\noindent [vdP-S] M. van der Put  and M.F.~Singer - {\it Galois Theory of Linear Differential Equations} - Grundlehren 328, Springer-Verlag 2003.\\
\noindent [S1] M.F.~Singer -{\it Solving homogeneous linear differential equations in terms of second order linear differential equations} -
Am. J. Math. 107: 663-696, 1985.\\
\noindent [S2] M.F.~Singer - {\it Algebraic relations among solutions of linear differential equations} - Trans. Am. Math. Soc., 295: 753-763, 1986.\\
\noindent [S3] M.F.~Singer - {\it  Algebraic relations among solutions of linear differential equations: Fano's theorem} - Am. J. Math., 110: 115-144, 1988. \\

\end{document}